\pgfplotsset{compat=1.16}
\tikzset{
    cross/.pic = {
    \draw[rotate = 45] (-#1,0) -- (#1,0);
    \draw[rotate = 45] (0,-#1) -- (0, #1);
    }
}
\pgfplotsset{compat=newest}
\pgfplotsset{compat=newest}
	\definecolor{ao(english)}{rgb}{0.0, 0.5, 0.0}
\definecolor{gray}{gray}{0.4}
\theoremstyle{plain}
\newtheorem{theorem}{Theorem}[section]
\newtheorem{definition}[theorem]{Definition}
\newtheorem{lemma}[theorem]{Lemma}
\newtheorem*{problem*}{Problem}
\newtheorem{proposition}[theorem]{Proposition}
\theoremstyle{definition}
\newtheorem{remark}[theorem]{Remark}
\def\Item$#1${\item $\displaystyle#1$
   \hfill\refstepcounter{equation}(\theequation)}
\theoremstyle{plain} 
\theoremstyle{definition}
\theoremstyle{remark} 
\numberwithin{equation}{section}
\newcommand{\rr}{\mathbb{R}}
\newcommand{\e}{\epsilon}
\newcommand{\inn}{\textnormal{in}\ }
\newcommand{\andd}{\textnormal{and}}
\newcommand{\bra}[1]{\{#1\}}
\newcommand{\loc}{_{loc}}
\newcommand{\system}[2]{\left\{\begin{array}{#1} #2 \end{array} \right.}
\newcommand{\dint}[1]{\ \textnormal{d}#1}
\newcommand{\norm}[2]{\left|\left| #1 \right| \right|_{#2}}
\newcommand{\dvol}{\textnormal{dv}}
\tikzstyle{mybox} = [draw=black, very thick, rectangle, rounded corners, inner ysep=5pt, inner xsep=5pt]
\begin{document}

\title[$L^p$ positivity preservation and self-adjointness]{$L^p$ positivity preservation and self-adjointness on incomplete Riemannian manifolds}
\author{Andrea Bisterzo and Giona Veronelli}
\address{Universit\`a degli Studi di Milano-Bicocca\\ Dipartimento di Matematica e Applicazioni \\ Via Cozzi 55, 20126 Milano - ITALY}
\email{a.bisterzo@campus.unimib.it}

\address{Universit\`a degli Studi di Milano-Bicocca\\ Dipartimento di Matematica e Applicazioni \\ Via Cozzi 55, 20126 Milano - ITALY}
\email{giona.veronelli@unimib.it}

\date{}

\maketitle


\begin{abstract}
The aim of this paper is to prove a qualitative property, namely the \textit{preservation of positivity}, for Schr\"odinger-type operators acting on $L^p$ functions defined on (possibly incomplete) Riemannian manifolds. A key assumption is a control of the behaviour of the potential of the operator near the Cauchy boundary of the manifolds. As a by-product, we establish the essential self-adjointness of such operators, as well as its generalization to the case $p\neq 2$, i.e. the fact that smooth compactly supported functions are an operator core for the Schr\"odinger operator in $L^p$.
\end{abstract}

\section{Introduction}

Let $(M,g)$ be a Riemannian manifold, $\Delta$ its negative definite Laplace-Beltrami operator (so that $\Delta = \tfrac{\partial^2}{\partial x^2}$ on $\rr$) and $V\in L^1\loc(M)$. Given a family of functions $\mathcal{S}\subseteq L^1\loc(M)$, we say that the $\mathcal{S}$ \textit{positivity preserving property} holds in $M$ for the operator $-\Delta+V$ if for every $u\in \mathcal{S}$
\begin{align*}
(-\Delta+V)u\geq 0 \quad \Rightarrow \quad u\geq 0\ \textnormal{a.e.},
\end{align*}
where the first inequality is understood in the sense of distributions. Recall that a function $u\in L^1\loc(M)$ satisfies $(-\Delta+V)u\geq 0$ (resp. $\leq 0$) \textit{in the sense of distributions} if
\begin{align*}
\int_M u(-\Delta+V)\psi\, \dvol \geq 0 \quad (\textnormal{resp.}\ \leq 0)
\end{align*}
for every $0\leq \psi\in C_c^\infty(M)$.

The positivity preservation for Schr\"odinger operators has been extensively studied in recent years. This definition was introduced by B. G\"uneysu in the paper \cite{Gu1} for the differential operator $-\Delta +1$, although the property first appeared in \cite{Ka} and \cite{BMS}. In particular, in \cite{BMS} the authors proved that the $L^2$ positivity preserving property for $-\Delta +1$ implies the essential self-adjointness of any operator $-\Delta+V$ with $0\leq V \in L^2\loc$. Since this type of operators are known to be essentially self-adjoints on complete manifolds (see \cite{BMS,Sh}), M. Braverman, O. Milatovic and M. Shubin conjectured that the differential operator $(-\Delta+1)$ must satisfy the $L^2$ positivity preservation on every complete Riemannian manifold. This assertion has been popularized under the name of \textsl{BMS conjecture} from the names of the three authors, \cite{Gu3}.

%

The BMS conjecture has been addressed by several authors, possibly considering additional assumptions on the geometry of the manifold at hand. See, for instance, \cite{BMS,BS,Gu1,Gu2,Ka0,MV}. Recently, it has been proved in the positive by S. Pigola, D. Valtorta and the second author in \cite{PVV} (see also \cite{GPSV} for a generalization to non-smooth Dirichlet spaces). Using a monotonic approximation argument and some regularity results for subharmonic distributions, they proved that on every complete Riemannian manifold the operator $-\Delta+1$ satisfies the $L^p$ positivity preserving property for any $p\in(1,+\infty)$.

Regarding the cases $p=1$ and $p=+\infty$, without further assumptions the $L^p$-positivity preservation property  for the operator $-\Delta+1$ in general might fail even for complete manifolds. In this respect, in the recent work \cite{BM} the first author and L. Marini determined
\begin{itemize}
\item that the $L^\infty$ positivity preserving property is equivalent to the stochastic completeness of the manifold (and thus unrelated to the geodesic completeness);

\item the optimality of Theorem II in \cite{MV}, which states that geodesic completeness and $\textnormal{Ric}(x)\geq -C r^2(x)$ outside a compact set imply the $L^1$ positivity preservation for the operator.
\end{itemize}\smallskip

The $L^p$-positivity preserving property for more general Schr\"odinger operators $-\Delta + V$ acting on complete Riemannian manifolds has been considered independently in the recent works \cite{ACR} and \cite{BFP}. In particular, in this latter preprint the authors established on the one hand the positivity preserving property for a class of $L^p\loc$ functions whose $L^p$ norm over geodesic balls satisfies a certain growth condition. On the other hand, for $p\in (1,+\infty)$, they successfully dealt with differential operators of the form $-\Delta+V$, where $0\leq V\in L^1\loc(M)$ may decays to $0$ at infinity. 

In a different direction, in \cite{PVV} the authors also managed to prove that the $L^p$ positivity preservation for the operator $-\Delta+1$ is stable by removing from a complete manifold a possibly singular subset satisfying certain Hausdorff co-dimension or uniform Minkowski-type conditions. 
As a consequence, they showed the essential self-adjointness in $L^2$ of Schr\"odinger operators of the form $-\Delta + V$ for lower bounded potential $V\in L^2_{loc}$, as well as the analogous spectral counterparts in $L^p$, that is the fact that $C^\infty_c$ is an operator core for the Schr\"odinger operator; see Section \ref{Sec:Core} for more details.\\\smallskip

The search for sufficient conditions to the validity of the essential self-adjointness of Schr\"odinger operators has been widely studied over the years. In the case of complete manifolds, let us mention at least \cite{Gu1,GPo,Ka2,Mi1,Mi2,Sh,Sh0}. In the incomplete case, the essential self-adjointness of the Laplace-Beltrami operator (i.e. when $V=0$) was first investigated by Y. Colin de Verdière and J. Masamune \cite{CdV,Ma} when the singularity has integer codimension, and by M. Hinz, J. Masamune and K. Suzuki in the recent \cite{HMS} in which the removed compact set may have non-integer codimension.
 
On the other hand, in \cite{MT} O. Milatovic and F. Truc adopted a different point of view in the study of the essential self-adjointness in $L^2$ of Schr\"odinger operators of the form $-\Delta + V$. Namely, they considered geodesically incomplete manifolds without requiring any assumption on the geometry of $M$, and in particular on the codimension of the Cauchy boundary. The price to pay is a much stronger restriction on the potential $V$, which is required to explose at least quadratically near the Cauchy boundary of $M$; see \cite[Theorem 3]{MT} for a precise statement.
In view of the results alluded to above, it is thus natural to speculate that an intermediate control on the behavior of the potential near the boundary can be combined with an intermediate bound on the codimension of the Cauchy boundary, to get assumptions which, in a sense, interpolate between the ones in 
\cite{MT} and in \cite{PVV}. This is the content of Theorem \ref{Thm:ESA}. 

As explained above, the approach to the essential self-adjointness through the $L^2$-positivity preservation that we adopted here naturally generalizes to the $L^p$ setting. We formalize this abstract phenomenon in Theorem \ref{Thm:Final}. As a concrete instance,
in Theorem \ref{Thm:Core} we prove that $C^\infty_c$ is an operator core in $L^p$ for the Schr\"odinger operator $-\Delta +V$ under assumptions on the smallness of the Minkowski dimension of the Cauchy boundary and on the growth of the potential $V$ near the boundary.


\medskip

The paper is organized as follows. In Section \ref{Sec:LpPP} we prove the $L^p$ positivity preserving property for operators of the form $-\Delta+V$, with $0\leq V\in L^\infty\loc$ having suitable lower bounds, which act on (possibly incomplete) Riemannian manifolds whose Cauchy boundary satisfies a Minkowski-type condition; see Theorem \ref{Thm:Lp}. Following the heuristic described above, in Sections \ref{Sec:ESA} and \ref{Sec:Core} we apply the positivity preserving property just obtained in order to prove respectively that this class of operators on $C^\infty_c$ are essentially self-adjoint in $L^2$ and that $C^\infty_c$ is an operator core for the maximal $p$-extension of $-\Delta+V$. These results are stated in Theorem \ref{Thm:ESA} and in Theorem \ref{Thm:Core}.



\section{$L^p$ positivity preservation}\label{Sec:LpPP}

This section is aimed at proving the following

\begin{theorem}\label{Thm:Lp}
Let $(N,h)$ be a complete Riemannian manifold and define $M:=N\setminus K$, where $K\subset N$ is a compact subset. Consider $V\in L^\infty\loc (M)$ so that
\begin{align*}
V(x)\geq \frac{C}{r^m(x)} \quad in\ M,
\end{align*}
where $C\in [0,1]$ and $m\in  \{0,2\}$ are positive constants and $r(x):=d^N(x,K)$ is the distance function from $K$. Fix $p\in (1,+\infty)$.

If there exist two positive constants $E\geq 1$ and
\begin{align}\label{Cond:1p}
h\geq\system{ll}{0 & if\ m=2\ and\ C=\frac{1}{p-1}\\  \frac{p+p\sqrt{1-(p-1)C}}{p-1} & if\ m=2\ and\ C\in \left(0,\frac{1}{p-1}\right)\\  \frac{2p}{p-1} & if\ m=0}
\end{align}
so that
\begin{align}\label{Cond:2p}
|B_r(K)|\leq E r^h \quad as\ r\to 0,
\end{align}
then the differential operator $-\Delta+V$ has the $L^p$ positivity preserving property.
\end{theorem}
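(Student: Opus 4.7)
I will adapt the monotone-approximation/Kato scheme of \cite{PVV} to the incomplete setting, the new ingredient being a cutoff near the Cauchy boundary $\partial K \subset N$ whose gradient bound interacts constructively with the lower bound of $V$.

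Given $u \in L^p(M)$ with $(-\Delta + V) u \geq 0$ distributionally, I set $v := u_- \in L^p(M)$. The distributional Kato inequality yields that $v$ is a non-negative weak subsolution, $(-\Delta + V)v \leq 0$ on $M$; interior elliptic regularity (justified by $V \in L^\infty_{loc}(M)$) gives $v \in W^{1,2}_{loc}(M) \cap L^\infty_{loc}(M)$. Testing against $\eta^2 v^{p-1}$ (suitably regularized near $\{v=0\}$ if $p<2$), where $\eta$ is Lipschitz with compact support in $M$, an integration by parts and Young's inequality with parameter $\alpha \in (0, p-1)$ give the master energy estimate
\begin{equation*}
(p-1-\alpha)\int \eta^2 v^{p-2}|\nabla v|^2 + \int V \eta^2 v^p \leq \frac{1}{\alpha} \int v^p |\nabla \eta|^2.
\end{equation*}
I will take $\eta = \rho_R \cdot \eta_\epsilon$, with $\rho_R$ a standard Lipschitz cutoff on the complete manifold $N$ at scale $R$ (available by geodesic completeness) and $\eta_\epsilon$ a cutoff of $\partial K$ chosen according to $m$: for $m=0$, a linear cutoff with $|\nabla \eta_\epsilon| \leq 2/\epsilon$ on the annulus $A_\epsilon := B_{2\epsilon}(K) \setminus B_\epsilon(K)$; for $m=2$, a logarithmic cutoff with $|\nabla \eta_\epsilon| \leq C_0/(r \log(1/\epsilon))$ on $A_\epsilon$. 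The contribution of $|\nabla \rho_R|$ vanishes as $R \to \infty$ thanks to $v \in L^p$ and the completeness of $N$.

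For the case $m=2$, I combine $V \geq C/r^2$ with a Hardy-type inequality $\int |\nabla \psi|^2 \geq \bigl(\tfrac{h-2}{2}\bigr)^2\int \psi^2/r^2$ valid for $\psi$ compactly supported in $M$ (granted by the Minkowski condition on $K$ when $h>2$, and trivially by the potential itself in the critical case $C=1/(p-1)$, where $h\geq 0$ suffices). Applied to $\psi = \eta_\epsilon v^{p/2}$ after absorption of the mixed terms, this converts a portion of the gradient term on the LHS into a multiple of $\int \eta_\epsilon^2 v^p / r^2$, so that the estimate reads
\begin{equation*}
\left[\frac{4(p-1-\alpha)}{p^2}\left(\frac{h-2}{2}\right)^2 + C\right] \int \eta_\epsilon^2 v^p/r^2 \leq \frac{C_0^2}{\alpha\log^2(1/\epsilon)} \int_{A_\epsilon} v^p/r^2 + (\text{lower order}).
\end{equation*}
Choosing $\alpha \in (0, p-1)$ so that the LHS coefficient dominates the vanishing contribution on the RHS produces a quadratic inequality in $h$ whose solvability is precisely \eqref{Cond:1p}; the square root $\sqrt{1-(p-1)C}$ emerges as the discriminant of the optimisation. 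Passing $\epsilon \to 0^+$ then forces $v \equiv 0$. For $m=0$, I use $\int V \eta^2 v^p \geq C \int \eta^2 v^p$ together with $\int v^p |\nabla \eta_\epsilon|^2 \leq (4/\epsilon^2) \int_{A_\epsilon} v^p$, then invoke H\"older's inequality together with interior Sobolev regularity (applied to $w := v^{p/2}$ on neighborhoods of $A_\epsilon$, where $V$ is bounded) and the Minkowski estimate $|A_\epsilon| \leq E(2\epsilon)^h$ to show $\int_{A_\epsilon} v^p = o(\epsilon^2)$ exactly when $h \geq 2p/(p-1)$.

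The hard part will be the calibration of the Young parameter $\alpha$ against the Hardy-type constant coming from $h$ and the potential constant $C$: it is the quadratic nature of this balance that is responsible for the square-root form of \eqref{Cond:1p} and the implicit restriction $C \leq 1/(p-1)$. A further subtlety is ensuring the a priori finiteness of the weighted integrals $\int v^p/r^2$ (resp. $\int_{\{r\leq K\}} v^p$) needed to legitimately pass the limit $\epsilon \to 0^+$; this must be bootstrapped from the Minkowski condition combined with the interior elliptic regularity of the non-negative subsolution $v$.
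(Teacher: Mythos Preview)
Your plan for $m=2$ rests on a Hardy-type inequality $\int |\nabla\psi|^2 \ge \bigl(\tfrac{h-2}{2}\bigr)^2\int \psi^2/r^2$ that you claim is ``granted by the Minkowski condition on $K$''. This is the gap: a one-sided upper bound $|B_r(K)|\le Er^h$ on tubular volumes does not imply a Hardy inequality, let alone one with the sharp Euclidean-model constant $\bigl(\tfrac{h-2}{2}\bigr)^2$; such inequalities are governed by capacity or lower Assouad-codimension conditions, and on a general Riemannian manifold there is no reason the constant should take that value. Your threshold is sensitive to precisely this constant. Moreover, even granting the Hardy inequality, with a logarithmic cutoff the right-hand side of your displayed estimate tends to zero as $\epsilon\to 0$ \emph{provided} $\int_{\{r<1\}}v^p/r^2<\infty$; hence either no constraint on $h$ emerges from the balance you describe (the left coefficient is positive for every $h\ge 2$, $\alpha<p-1$, $C>0$), or you are tacitly assuming the very weighted integrability that you flag as an unresolved ``subtlety'' at the end. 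In either reading the quadratic in $h$ does not arise the way you indicate.

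The paper avoids Hardy altogether. The cutoff near $K$ is the power $(r/\epsilon)^\delta$, and the Caccioppoli inequality \eqref{Eq:Caccioppoli} is applied with a \emph{free} exponent $s\in(1,p]$ (your test function $\eta^2 v^{p-1}$ corresponds only to $s=p$). On the annulus the gradient and potential contributions combine into $[\delta^2-C(s-1)r^{2-m}]\,r^{2\delta-2}/\epsilon^{2\delta}$, which is annihilated by choosing $\delta^2=C(s-1)$. The remaining inner piece $\int_{B_{2\eta}}u^s$ is controlled by H\"older against $|B_{2\eta}|^{(p-s)/p}\le E^{(p-s)/p}(2\eta)^{h(p-s)/p}$, and requiring the resulting exponent $h(p-s)/p+2\delta-2$ of $\eta$ to be non-negative is the quadratic in $h$ that gives \eqref{Cond:1p}. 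The extra parameter $s$ is what makes the Minkowski bound enter; it has no counterpart in your scheme. A separate Brezis--Kato truncation then removes the compact-support assumption.
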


 \begin{remark}
Reasoning as in \cite[Section 5 ]{PVV}, it is easy to see that Theorem \ref{Thm:Lp}, and consequently Theorems \ref{Thm:ESA} and \ref{Thm:Core}, hold as well if $N$ is assumed to be $q$-parabolic for some $q\ge \frac{2p}{p-1}$, but possibly incomplete.  
\end{remark}
\begin{remark}
As explained in the introduction, the case $m=0$ recovers a result obtained in \cite{PVV}.
\end{remark}
\medskip

\subsection{Preliminary results}
In order to prove Theorem \ref{Thm:Lp} we need two fundamental tools. The first is the classical Brezis-Kato inequality. We refer to \cite{Br,Po} for the Euclidean result and to \cite{PVV} for the Riemannian version.

\begin{proposition}[Brezis-Kato inequality]\label{Prop:BrezisKato}
Let $(M,g)$ be a Riemannian manifold and $V$ a measurable function over $M$.

If $u\in L^1\loc(M)$ is so that $Vu\in L^1\loc(M)$ and satisfies $(-\Delta+V)u\leq 0$ in the sense of distributions, then
\begin{align*}
(-\Delta+V)u^+ \leq 0 \quad in\ the\ sense\ of\ distributions,
\end{align*}
where $u^+(x):=\max\bra{u(x),0}$.
\end{proposition}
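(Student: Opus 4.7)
The plan is to combine the distributional hypothesis with Kato's inequality for the positive-part operation, preceded by a Riesz-representation observation on the structure of $\Delta u$.

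\textbf{First step.} The hypothesis $(-\Delta+V)u \leq 0$ in $\mathcal{D}'(M)$, together with $Vu \in L^1_{loc}(M)$, implies that $\Delta u$ is a signed Radon measure on $M$. Indeed, the distribution $T := (-\Delta+V)u$ is non-positive by assumption and hence, by the Riesz representation theorem for non-positive distributions, coincides with a non-positive Radon measure (still denoted $T$). Thus $\Delta u = Vu - T$ as distributions, where $Vu\dvol$ is absolutely continuous and $-T$ is a non-negative Radon measure.

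\textbf{Second step.} I would then invoke Kato's inequality in the form
\begin{align*}
\Delta u^+ \geq \chi_{\{u>0\}}\Delta u \quad \text{in } \mathcal{D}'(M),
\end{align*}
valid whenever $u\in L^1_{loc}(M)$ and $\Delta u$ is a signed Radon measure, the right hand side being interpreted against test functions via integration with respect to the measure $\Delta u$. Granted this, for every $0\leq \psi \in C_c^\infty(M)$ one has
\begin{align*}
\int_M u^+ (-\Delta+V)\psi\,\dvol
&= -\int_M u^+ \Delta\psi\,\dvol + \int_M V u^+\psi\,\dvol \\
&\leq -\int_M \chi_{\{u>0\}}\psi\, d(\Delta u) + \int_M V u^+\psi\,\dvol \\
&= -\int_M \chi_{\{u>0\}}\psi\, Vu\,\dvol + \int_M \chi_{\{u>0\}}\psi\,dT + \int_M V u^+\psi\,\dvol \\
&= \int_M \chi_{\{u>0\}}\psi\,dT \;\leq\; 0,
\end{align*}
where the third equality uses $\chi_{\{u>0\}}u = u^+$ and the final inequality follows since $T\leq 0$ as a measure while $\chi_{\{u>0\}}, \psi \geq 0$. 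This is precisely the distributional inequality to be proved.

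\textbf{Main obstacle.} The nontrivial input is Kato's inequality itself. The classical proof proceeds by mollification, to be performed in local charts: set $u_\epsilon := u\ast \rho_\epsilon$ and compose with a smooth convex non-decreasing family $\phi_\eta$ converging to $t\mapsto t^+$, with $\phi_\eta' \to \chi_{\{t>0\}}$ (for instance $\phi_\eta(t)=\tfrac12(t+\sqrt{t^2+\eta^2}-\eta)$). The chain rule and convexity give
\begin{align*}
\Delta(\phi_\eta(u_\epsilon)) = \phi_\eta'(u_\epsilon)\Delta u_\epsilon + \phi_\eta''(u_\epsilon)|\nabla u_\epsilon|_g^2 \geq \phi_\eta'(u_\epsilon)\Delta u_\epsilon,
\end{align*}
after which, testing against $0\leq \psi\in C_c^\infty$ and passing to the limit first in $\epsilon\to 0$ (absorbing the commutator $[\Delta,\,\cdot\ast\rho_\epsilon]$, a lower-order error term on a manifold) and then in $\eta\to 0$ by dominated convergence yields the inequality. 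Rather than rederive these facts, I would appeal to the Euclidean versions in \cite{Br,Po} and to the Riemannian adaptation given in \cite{PVV}.
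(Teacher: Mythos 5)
The paper does not actually prove this proposition: it is quoted as a known result, with the Euclidean case attributed to \cite{Br,Po} and the Riemannian adaptation to \cite{PVV}. Your sketch reconstructs precisely the argument behind those references (Kato's inequality when $\Delta u$ is a measure), and the overall logic is sound: the Riesz--Schwartz step identifying $T=(-\Delta+V)u$ with a non-positive Radon measure is correct, and the final chain of inequalities does yield $\int_M u^+(-\Delta+V)\psi\,\textnormal{dv}\leq 0$. One point deserves care, though. The form of Kato's inequality you invoke, $\Delta u^+\geq \chi_{\{u>0\}}\Delta u$ with the \emph{full} measure $\Delta u$ on the right, is stronger than what is actually available: since $u$ is only defined up to Lebesgue-null sets, $\chi_{\{u>0\}}$ is not canonically defined against the singular part of $\Delta u$, and the correct statement (Brezis--Ponce) controls $\Delta u^+$ from below only by $\chi_{\{u\geq 0\}}(\Delta u)_a$, the absolutely continuous part, possibly plus a non-negative contribution from the singular part. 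In your situation this is harmless: writing $\Delta u=Vu\,\textnormal{dv}+(-T)$ with $-T\geq 0$, the singular part of $\Delta u$ equals $(-T)_s\geq 0$ and is exactly the term you discard at the last step, so the weaker, correct form of Kato's inequality already gives $\Delta u^+\geq \chi_{\{u\geq 0\}}\bigl(Vu+(-T)_a\bigr)\geq Vu^+$, which is the desired conclusion. Finally, your mollification sketch is Euclidean in spirit; on a manifold the commutator with a local Friedrichs mollifier is genuinely lower order but not free, which is why deferring that technical step to \cite{PVV} (as the paper itself does) is the right call rather than asserting it.
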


The second ingredient is the regularity result contained in \cite[Proposition 2.2]{BFP}. Initially stated for complete Riemannian manifolds, we stress that its original proof recovers in fact also the case of incomplete Riemannian manifolds. Before stating this result, we recall that the negative part of a real-valued function, denoted with $u^-$, is defined as
\begin{align*}
u^-(x):=\max \bra{-u,0} = (-u)^+(x).
\end{align*}
Using the above notation, the mentioned regularity result states what follows.

\begin{proposition}\label{Prop:Caccioppoli1}
Let $(M,g)$ be a (possibly incomplete) Riemannian manifolds and $0\leq V\in L^\infty\loc(M)$.

If $u\in L^1\loc(M)$ satisfies $(-\Delta+V)u\geq 0$ in the sense of distributions, then
\begin{enumerate}
\item $u^- \in L^\infty\loc(M)$ and $(u^-)^{p/2}\in W^{1,2}\loc(M)$ for every $p\in (1,+\infty)$;
\item for every $p\in (1,+\infty)$ the function $u^-$ satisfies
\begin{align}\label{Eq:Caccioppoli}
(p-1) \int_M V(u^-)^p \varphi^2 \dvol \leq \int_M (u^-)^p |\nabla \varphi|^2 \dvol
\end{align}
for every $0\leq \varphi \in C_c^{0,1}(M)$.
\end{enumerate}
\end{proposition}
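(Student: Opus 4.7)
The strategy follows the classical route for nonnegative distributional subsolutions of Schr\"odinger operators and exploits that every test function $\varphi\in C_c^{0,1}(M)$ is supported in a relatively compact subset of $M$. All computations being purely local, incompleteness of $M$ plays no role, and one essentially re-runs the complete-manifold argument of \cite[Proposition~2.2]{BFP}.

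First I would reduce to a nonnegative subsolution: setting $v:=-u$, the assumption $(-\Delta+V)u\geq 0$ rewrites as $(-\Delta+V)v\leq 0$ distributionally, and Proposition~\ref{Prop:BrezisKato} then gives $(-\Delta+V)u^-\leq 0$ in the sense of distributions. Write $w:=u^-\geq 0$. Since $V,w\geq 0$, it follows that $-\Delta w\leq -Vw\leq 0$ distributionally, so $w$ is a nonnegative distributional subharmonic function; by standard potential-theoretic regularity (every $L^1\loc$ subharmonic distribution admits a locally bounded upper semicontinuous representative), one obtains $w\in L^\infty\loc(M)$.

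With $w\in L^\infty\loc$, I would test the distributional inequality against $\psi:=w^{p-1}\varphi^2$, or against the regularisation $\psi_\varepsilon:=((w+\varepsilon)^{p-1}-\varepsilon^{p-1})\varphi^2$ with $\varepsilon\downarrow 0$ when $p\in(1,2)$. Integration by parts yields
\begin{align*}
(p-1)\int_M w^{p-2}\varphi^2|\nabla w|^2\,\dvol+2\int_M w^{p-1}\varphi\,\nabla w\cdot\nabla\varphi\,\dvol+\int_M Vw^p\varphi^2\,\dvol\leq 0.
\end{align*}
Young's inequality $2ab\leq (p-1)a^2+\tfrac{1}{p-1}b^2$ applied to $a:=w^{(p-2)/2}\varphi|\nabla w|$ and $b:=w^{p/2}|\nabla\varphi|$ absorbs the cross term and leaves precisely \eqref{Eq:Caccioppoli}. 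Choosing Young's constant $(p-1)/2$ instead retains a positive multiple of $\int_M w^{p-2}\varphi^2|\nabla w|^2\,\dvol=\tfrac{4}{p^2}\int_M|\nabla w^{p/2}|^2\varphi^2\,\dvol$, yielding $w^{p/2}\in W^{1,2}\loc(M)$ by varying $\varphi$.

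The main obstacle is to rigorously justify the test-function plugging, since a priori $w\in L^1\loc$ only. The standard remedy is a double approximation performed on coordinate charts: mollify $w$ to $w*\chi_\eta$ so that the regularised function is a smooth subsolution modulo errors vanishing with $\eta$, and for $p\in(1,2)$ truncate by $\min(w,k)$ to bypass the singularity of $t\mapsto t^{p-1}$ at the origin. Passing to the limit via monotone/dominated convergence, with the cross term controlled uniformly by Young's inequality, closes the argument; all manipulations being local in relatively compact subsets, the Cauchy boundary of $M$ creates no obstruction.
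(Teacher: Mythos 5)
Your argument is correct and reproduces the standard Caccioppoli-type proof of \cite[Proposition 2.2]{BFP}, which is exactly what the paper itself does: it imports the statement from that reference and merely observes that, since all the estimates are local, completeness of $M$ is never used. Your reduction via the Brezis--Kato inequality to the nonnegative subsolution $w=u^-$, the $L^\infty_{loc}$ bound obtained from $w\geq 0$ together with distributional subharmonicity, and the test functions $((w+\varepsilon)^{p-1}-\varepsilon^{p-1})\varphi^2$ with Young's inequality absorbing the cross term are precisely the expected ingredients.
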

\medskip

\subsection{Positivity preserving}
In this subsection, we will prove the positivity preserving property stated as Theorem \ref{Thm:Lp}, which is based on the inequality \eqref{Eq:Caccioppoli}. To this aim, let $R>\e>2\eta> 0$ and $\delta>0$ and consider the following real function $\psi:\rr_{\geq 0}\to \rr_{\geq 0}$ 
\bigskip

\hspace{-0.5cm}\begin{minipage}{5cm}
\begin{align*}
\psi_{R,\e,\eta}(t):=\system{ll}{0 & \inn [0, \eta) \\ \frac{t-\eta}{\eta} \left( \frac{2\eta}{\e}\right)^\delta  & \inn [\eta, 2\eta)\\ \left(\frac{t}{\e}\right)^\delta & \inn [2\eta, \e) \\ 1 & \inn [\e, R) \\ \frac{R+\eta-t}{\eta} & \inn  [R,R+\eta) \\ 0 & \inn [R+\eta,+\infty).}
\end{align*}
\end{minipage}
\hfill\hspace{-0.5cm}
\begin{minipage}{5cm}
\begin{center}
\begin{tikzpicture}[scale=1.2]
\path[draw,->] (-0.1,0)--(4,0);
\path[draw,->] (0,0)--(0,1.5);
\path[draw] (-0.05,1)--(0.05,1);
\path (-0.05,1) node[anchor=east]{$1$};
\path[red, draw, line width=0.5mm] (0,0)--(0.2,0)--(0.4,0.62);
\path[red,draw, line width=0.5mm] (0.4,0.6) arc(180:90: 0.6 and 0.4);
\path[red, draw, line width=0.5mm](1,1)--(2,1)--(2.5,0)--(3.96,0);
\path[black] (1,1.3) node {$\psi_{R,\e,\eta}$};
\path (-0.2,0) node[anchor=north] {$0$};
\path (0.1,0) node[anchor=north] {$\eta$};
\path[dashed, draw] (0.4,0)--(0.4,0.58) (0.5,0) node[anchor=north] {$2\eta$};
\path[dashed, draw] (1,0)--(1,1) (1,0) node[anchor=north] {$\e$};
\path[dashed, draw] (2,0)--(2,1) (2,0) node[anchor=north] {$R$};
\path[dashed, draw] (3,0) node[anchor=north] {$R+\eta$};
\end{tikzpicture}
\end{center}
\end{minipage}
\bigskip

%

Let $(N,g)$ be a complete Riemannian manifold and define $M:=N\setminus K$, where $K\subset N$ is a compact subset. Denote with $r(x):=d^N(x,K)$ the distance function from $K$ and consider the following cut-off function
\begin{align*}
\varphi_{R,\e,\eta}:=\left(\psi_{R,\e,\eta}\circ r\right)\in C^{0,1}_c(M).
\end{align*}
In particular, $\varphi_{R,\e,\eta}$ can be extended to $0$ in $K$, obtaining $\varphi_{R,\e,\eta}\in C^{0,1}_c(N)$.

We are now in a position to prove the $L^p$ positivity preserving property.

\begin{proof}[Proof of Theorem \ref{Thm:Lp}]
Let $v\in L^p$ be a solution to $(-\Delta+V)v\geq 0$ and denote $u:=v^- \geq 0$. Fix $\delta>0$ and for $0\leq2\eta<\e<R$ consider the function $\varphi_{R,\e,\eta}$.

\textbf{Step 1.} We start by supposing that the support of $v$ is compact in $N$. Fix $s\in(1,p]$. By applying \eqref{Eq:Caccioppoli} to the test functions $\varphi_{R,\e,\eta}$, we get
\begin{align*}
(s-1) \int_M u^s V \varphi_{R,\e,\eta}^2 \dvol \leq \int_M u^s |\nabla \varphi_{R,\e,\eta}|^2 \dvol.
\end{align*}
On the one hand, we have
\begin{align*}
(s-1) & \int_M u^s V \varphi_{R,\e,\eta}^2 \dvol \\
& \geq (s-1) \int_{B_\e \setminus B_{2\eta}} u^s \frac{C}{r^m} \left(\frac{r}{\e} \right)^{2\delta} \dvol + (s-1) \int_{B_R\setminus B_\e} u^s V \dvol
\end{align*}
while, on the other hand, choosing $R$ big enough so that the support of $u$ is contained in $B_R$,
\begin{align*}
\int_M u^s |\nabla \varphi_{R,\e,\eta}|^2 \dvol \leq \int_{B_{2\eta}\setminus B_\eta} u^s \frac{1}{\eta^2} \left(\frac{2\eta}{\e} \right)^{2\delta} \dvol + \int_{B_\e \setminus B_{2\eta}} u^s \delta^2 \frac{r^{2\delta-2}}{\e^{2\delta}} \dvol.
\end{align*}
By putting together previous inequalities, we obtain
\begin{align*}
 (s-1) & \int_{B_R\setminus B_\e} u^s V \dvol\\
& \leq \int_{B_{2\eta}\setminus B_\eta} u^s \frac{1}{\eta^2} \left(\frac{2\eta}{\e} \right)^{2\delta} \dvol \\
& \quad \quad \quad + \int_{B_\e \setminus B_{2\eta}} u^s \delta^2 \frac{r^{2\delta-2}}{\e^{2\delta}} \dvol - (s-1) \int_{B_\e \setminus B_{2\eta}} u^s \frac{C}{r^m} \left(\frac{r}{\e} \right)^{2\delta} \dvol\\
& = \int_{B_{2\eta}\setminus B_\eta} u^s \frac{1}{\eta^2} \left(\frac{2\eta}{\e} \right)^{2\delta} \dvol + \int_{B_\e \setminus B_{2\eta}} u^s \frac{r^{2\delta-2}}{\e^{2\delta}}\left[\delta^2 - C(s-1)r^{2-m} \right] \dvol\\
& \leq  \int_{B_{2\eta}} u^s \frac{1}{\eta^2} \left(\frac{2\eta}{\e} \right)^{2\delta} \dvol + \left[\delta^2 - C(s-1)(2\eta)^{2-m} \right] \int_{B_\e \setminus B_{2\eta}} u^s \frac{r^{2\delta-2}}{\e^{2\delta}} \dvol\\
&\underset{Hölder}{\leq} 4 \e^{-2\delta} E^{\frac{p-s}{p}} (2\eta)^{h\frac{p-s}{p}+2\delta-2} \left(\int_{B_{2\eta}} u^p \dvol\right)^{\frac{s}{p}} \\
& \quad \quad \quad + \left[ \delta^2-(s-1) C (2\eta)^{2-m}\right] \int_{B_\e \setminus B_{2\eta}} u^s \frac{r^{2\delta-2}}{\e^{2\delta}}\dvol.
\end{align*}
Hence, recalling that the support of $u$ is contained in $B_R$,
\begin{equation}\label{App1p}
\begin{split}
(s-1)\int_{B_\e^c} u^s V  \dvol & \leq  4 \e^{-2\delta} E^{\frac{p-s}{p}} (2\eta)^{h\frac{p-s}{p}+2\delta-2} \left(\int_{B_{2\eta}} u^p \dvol\right)^{\frac{s}{p}}\\
& \quad \quad \quad + \left[ \delta^2-(s-1) C (2\eta)^{2-m}\right] \int_{B_\e \setminus B_{2\eta}} u^s \frac{r^{2\delta-2}}{\e^{2\delta}}\dvol
\end{split}
\end{equation}
for every $s\in (1,p]$. 
In our assumptions, we can choose $\delta$ and $s$ so that
\begin{align}\label{Cond:dp}
\delta^2-(s-1)C(2\eta)^{2-m}=0
\end{align}
and
\begin{align}\label{Cond:hp}
h \frac{p-s}{p}+2\delta-2\geq 0
\end{align}
for every $h$ satisfying \eqref{Cond:1p}. Indeed, following a case-by-case analysis:
\begin{itemize}
\item \underline{$m=2$ and $C=\frac{1}{p-1}$}: in this case we can just choose $s=p$ and $\delta=1$, so that \eqref{Cond:hp} is trivially satisfied for every $h\ge 0$.

\item \underline{$m=2$ and $C\in\left(0,\frac{1}{p-1}\right)$}: in this case we choose $\delta=\frac{pC}{h}$ and $s=1+\frac{\delta^2}{C}$.
Observing that
\begin{align*}
h \frac{p-s}{p}+2\delta-2\geq 0 \quad &\Leftrightarrow \quad h(p-s)+2p\delta-2p\geq 0 \\
&\Leftrightarrow \quad h\left(p-1-\frac{\delta^2}{C} \right)+2p\delta-2p\geq 0\\
&\Leftrightarrow \quad h^2(p-1)-h2p+p^2 C \geq 0,
\end{align*}
by the fact that $C<\frac{1}{p-1}$ it follows
\begin{align*}
&\Delta=4p^2-4p^2(p-1)C\geq 0\\
&\Rightarrow \quad h^2(p-1)-h2p+p^2 C \geq 0 \quad \forall h\geq \frac{p+p\sqrt{1-(p-1)C}}{p-1}\\
&\Rightarrow \quad h \frac{p-s}{p}+2\delta-2\geq 0\quad \forall h\geq \frac{p+p\sqrt{1-(p-1)C}}{p-1},
\end{align*}
implying \eqref{Cond:hp} when $\eta$ is small enough,.

\item \underline{$m=0$}: we choose $\delta=\frac{pC(2\eta)^{2}}{h}$ and $s=1+\frac{\delta^2}{C(2\eta)^{2}}$. As in previous case
\begin{align*}
h \frac{p-s}{p}+2\delta-2\geq 0 \quad &\Leftrightarrow \quad h(p-s)+2p\delta-2p\geq 0 \\
&\Leftrightarrow \quad  h^2(p-1)-h2p+p^2C(2\eta)^{2}\geq 0
\end{align*}
with
\begin{align*}
\Delta=4p^2-4p^2(p-1)C(2\eta)^{2}.
\end{align*}
Since we are interested in the limit as $\eta\to 0$, we get
\begin{align*}
h \frac{p-s}{p}+2\delta-2\geq 0 \quad \forall h\geq \frac{2p}{p-1}
\end{align*}
implying, again, \eqref{Cond:hp}.
\end{itemize}
From \eqref{Cond:dp} and \eqref{Cond:hp}, the inequality \eqref{App1p} implies
\begin{align*}
0\leq &(s-1)\int_{B_\e^c} u^s V  \dvol \leq \left(\int_{B_{2\eta}} u^p \dvol\right)^{\frac{s}{p}} 4\e^{-2\delta} E^{\frac{p-s}{p}} (2\eta)^{h\frac{p-s}{p}+2\delta-2}\xrightarrow[]{\eta\to 0}0.
\end{align*}
Since it holds for any fixed $\e>0$, we get
\begin{align*}
\int_M u^s V \dvol=0
\end{align*}
that, together with the fact that $V>0$ and $u\geq 0$, implies
\begin{align*}
u=v^-\equiv 0.
\end{align*}

\textbf{Step 2.} Now consider the general case where $v$ is not assumed to be compactly supported. Since $u:=v^- \in L^\infty\loc(M)$ by Proposition \ref{Prop:Caccioppoli1}, it follows that $\norm{u}{L^\infty(B_\e \setminus B_{2\eta})}<+\infty$. Consider the function
\begin{align*}
w:=\system{ll}{\left(\norm{u}{L^\infty(B_\e \setminus B_{2\eta})}-u\right)^- & \inn B_\e \\ 0 & \inn B_\e^c.}
\end{align*}
By Proposition \ref{Prop:BrezisKato},
\begin{align*}
(-\Delta+V)v\geq 0 \quad &\Rightarrow \quad (-\Delta+V)\left(\norm{u}{L^\infty(B_\e \setminus B_{2\eta})}-u\right) \geq 0 \\ &\Rightarrow \quad (-\Delta+V)w\geq 0,
\end{align*}
where the last inequality holds since $\left(\norm{u}{L^\infty(B_\e \setminus B_{2\eta})}-u\right)\geq 0$ in $B_\e \setminus B_{2\eta}$. Since $w\in L^p(M)$, by Step 1,	
\begin{align*}
\norm{u}{L^\infty(B_\e \setminus B_{2\eta})} \geq u \geq 0 \quad \inn B_\e.
\end{align*} 
In particular,
\begin{align}\label{Eq:Interpolation}
u\in L^p(B_\e)\cap L^\infty(B_\e) \quad \Rightarrow \quad u\in L^q(B_\e)\ \forall q\geq p.
\end{align}
As a consequence, by Proposition \ref{Prop:Caccioppoli1} applied to the test function $\varphi_{R,\e,\eta}$, for any $s\in (1,p]$
\begin{align*}
(s-1) & \int_{M} u^p V \varphi_{R,\e,\eta}^2 \dvol\\
& \leq (p-1) \int_{M} u^p V \varphi_{R,\e,\eta}^2 \dvol \\
& \leq \int_{M} u^p |\nabla \varphi_{R,\e,\eta}|^2 \dvol\\
& \leq \int_{B_{2\eta}} u^p \frac{1}{\eta^2} \left(\frac{2\eta}{\e} \right)^{2\delta} \dvol + \delta^2 \int_{B_\e \setminus B_{2\eta}} u^p \frac{r^{2\delta-2}}{\e^{2\delta}} \dvol + \int_{B_{R+\eta}\setminus B_R} u^p \frac{1}{\eta^2}\dvol
\end{align*}
and as $R\to +\infty$ we get
\begin{align*}
(s-1) & \int_{B_\e \setminus B_{2\eta}} u^p \frac{C}{r^m} \left( \frac{r}{\e}\right)^{2\delta} \dvol  + (s-1)\int_{B_\e^c} u^p V \dvol\\
	& \leq \lim_{R\to +\infty} (s-1) \int_M u^p V \varphi_{R,\e,\eta}^2 \dvol\\
	& \leq \int_{B_{2\eta}} u^p \frac{1}{\eta^2} \left(\frac{2\eta}{\e} \right)^{2\delta} \dvol + \delta^2 \int_{B_\e \setminus B_{2\eta}} u^p \frac{r^{2\delta-2}}{\e^{2\delta}} \dvol
\end{align*}
which implies
\begin{align*}
(s-1) & \int_{B_\e^c} u^p V \dvol\\
	& \leq \int_{B_{2\eta}} u^p \frac{1}{\eta^2} \left(\frac{2\eta}{\e} \right)^{2\delta} \dvol + \int_{B_\e \setminus B_{2\eta}}u^p  \frac{r^{2\delta-2}}{\e^{2\delta}} \left[\delta^2 - C(s-1) r^{2-m} \right] \dvol\\
& \leq \int_{B_{2\eta}} u^p \frac{1}{\eta^2} \left(\frac{2\eta}{\e} \right)^{2\delta} \dvol + \left[\delta^2 - C(s-1)(2\eta)^{2-m} \right] \int_{B_\e \setminus B_{2\eta}} u^p \frac{r^{2\delta-2}}{\e^{2\delta}} \dvol.
\end{align*}
In particular, this is equivalent to
\begin{align*}
(s-1)& \int_{B_\e^c} \left(u^\frac{p}{s}\right)^s V \dvol \\
& \leq \int_{B_{2\eta}} \left(u^\frac{p}{s}\right)^s \frac{1}{\eta^2} \left(\frac{2\eta}{\e} \right)^{2\delta} \dvol + \left[\delta^2 - C(s-1)(2\eta)^{2-m} \right] \int_{B_\e \setminus B_{2\eta}} \left(u^\frac{p}{s}\right)^s \frac{r^{2\delta-2}}{\e^{2\delta}} \dvol
\end{align*}
for any $s\in (1,p]$. Observing that $0\leq u^\frac{p}{s}\in L^p(B_\e)$ thanks to \eqref{Eq:Interpolation}, under the assumptions \eqref{Cond:2p} and \eqref{Cond:1p} we can apply the argument presented in previous step obtaining that $u\equiv 0$ in $B_\e^c$. By the arbitrariness of $\e>0$, we get $u\equiv 0$ and so $v$ is nonnegative.
\end{proof}

\section{Essential self-adjointness}\label{Sec:ESA}
As mentioned above, the positivity preserving property arises naturally when one deals with the self-adjointness of unbounded operators. In particular, as we are going to see, as soon as the $L^2$ positivity preserving property holds for a certain class of Schr\"odinger operators, then these operators turn out to be essentially self-adjoint.
\medskip

\subsection{Standard notions and results about self-adjointness} We recall some basic definitions about unbounded operators defined over Hilbert spaces. For further details, we refer to \cite{Ka,RS0,RS}.

Let $B$ be 
%
%
an Hilbert space with respect to the scalar product $(\cdot,\cdot)_B$, and $T:D(T)\subseteq B\to B$ an unbounded linear operator, where $D(T)$ is the domain of $T$. The \textit{adjoint of $T$}, denoted with $T^*$, is defined as the unbounded linear operator on $B$ whose domain is
\begin{align*}
D(T^*):=\bra{v\in B\ :\ \exists w\in B\ \textnormal{s.t.}\ (Tu,v)_B=(u,w)_B\ \forall u \in D(T)}
\end{align*}
and whose action is given by $T^*v=w$. In particular, by definition
\begin{align*}
(Tu,v)_B=(u,T^*v)_B \quad \forall u\in D(T), v\in D(T^*).
\end{align*}
The operator $T$ is said to be
\begin{itemize}
\item \textit{symmetric} if
\begin{align*}
(Tu,v)_B=(u,Tv)_B\quad \forall u,v\in D(T)
\end{align*}
or, equivalently, if $T\subseteq T^*$;
\item \textit{self-adjoint} if $T=T^*$, that is, if $T$ is symmetric and $D(T)=D(T^*)$;
\item \textit{essentially self-adjoint} if $T$ is symmetric and its closure $\overline{T}$ (defined as the operator whose graph is the closure of the graph of $T$) is self-adjoint.
\end{itemize}

\begin{remark}
We stress that
\begin{itemize}
\item by definition, the adjoint of an operator is a closed operator. In particular, if $T$ is symmetric (resp. self-adjoint), then $T$ is closable (resp. closed);
\item by an abstract fact (\cite[Theorem 5.29]{Ka}), $(T^*)^*=\overline{T}$;
\item a symmetric operator $T$ is essentially self-adjoint if and only if it has a unique self-adjoint extension (see \cite[page 256]{RS0}).
\end{itemize}
\end{remark}
\medskip

\subsection{Essential self-adjointness} A first application of Theorem \ref{Thm:Lp} to the theory of unbounded operators is the following result concerning the essential self-adjointness of $-\Delta+V$. The case $m=2$ and $C=1$ was previously obtained in \cite{MT} with a different approach, while the case $m=0$ is already contained in \cite{PVV}. Here we recover with a unified point of view both sets of assumptions, as well as all the new intermediate case $m=2$ and $C\in(0,1)$.

\begin{theorem}\label{Thm:ESA}
Let $(N,h)$ be a complete Riemannian manifold and define $M:=N\setminus K$, where $K\subset N$ is a compact subset. Consider $V\in L^\infty\loc (M)$ so that
\begin{align*}
V(x)\geq \frac{C}{r^m(x)}-B \quad in\ M,
\end{align*}
where $C\in [0,1]$, $m\in {0,2}$ and $B$ are positive constants and $r(x):=d^N(x,K)$ is the distance function from $K$.

If there exist two positive constants $E\geq 1$ and
\begin{align}\label{Cond:1}
h\geq\system{ll}{0 & if\ m=2\ and\ C=1\\ 2+2\sqrt{1-C} & if\ m=2\ and\ C\in (0,1)\\ 4 & if\ m=0}
\end{align}
so that
\begin{align}\label{Cond:2}
|B_r(K)|\leq E r^h \quad as\ r\to 0,
\end{align}
then the differential operator $-\Delta+V:C^\infty_c(M)\subset L^2(M) \to L^2(M)$ is essentially self-adjoint.
\end{theorem}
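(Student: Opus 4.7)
The plan is to deduce Theorem \ref{Thm:ESA} from Theorem \ref{Thm:Lp} via the classical Braverman--Milatovic--Shubin strategy that links $L^2$-positivity preservation to essential self-adjointness of semi-bounded Schr\"odinger operators (see e.g.\ \cite{BMS,RS}). First I would verify that $T := -\Delta + V$ with domain $C_c^\infty(M)$ is a densely defined symmetric operator on $L^2(M)$ which is bounded from below by $-B$: symmetry follows from integration by parts (compactly supported test functions on the open manifold $M$ produce no boundary contribution at the Cauchy boundary, and $V\in L^\infty\loc(M)$ ensures $Vu\in L^2(M)$ for $u\in C_c^\infty(M)$), and
\[
(Tu,u)_{L^2}= \|\nabla u\|_{L^2}^2+\int_M V|u|^2\,\dvol \geq -B\|u\|_{L^2}^2
\]
since $V\geq -B$.

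By the standard criterion for essential self-adjointness of semi-bounded symmetric operators, $T$ is essentially self-adjoint if and only if $\ker(T^* + \lambda\,\mathrm{Id}) = \{0\}$ for some $\lambda > B$. I would therefore pick $\lambda := B+1$ and take an arbitrary $u \in L^2(M)$ satisfying $T^* u = -(B+1)u$. Unwinding the definition of $T^*$, this says exactly that $u$ is a distributional solution on $M$ of
\[
(-\Delta + \widetilde V)u = 0, \qquad \widetilde V := V + B + 1.
\]

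The heart of the argument is the observation that $\widetilde V$ satisfies the hypotheses of Theorem \ref{Thm:Lp} at $p = 2$: it belongs to $L^\infty\loc(M)$ and satisfies $\widetilde V \geq \frac{C}{r^m}$ on $M$ with $C \in [0,1] = [0,\tfrac{1}{p-1}]$ and $m \in \{0,2\}$; moreover the Minkowski bound \eqref{Cond:2} coincides with \eqref{Cond:2p} for $p=2$ because the thresholds in \eqref{Cond:1p} specialize exactly to those in \eqref{Cond:1}, since $\tfrac{p + p\sqrt{1-(p-1)C}}{p-1} = 2 + 2\sqrt{1-C}$ and $\tfrac{2p}{p-1} = 4$ when $p = 2$. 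Therefore $-\Delta + \widetilde V$ enjoys the $L^2$ positivity preserving property. Applying it to both $u$ and $-u$, each of which satisfies the distributional inequality $(-\Delta + \widetilde V)(\pm u) = 0 \geq 0$, forces $\pm u \geq 0$ a.e., hence $u \equiv 0$, as required.

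The argument is essentially mechanical once Theorem \ref{Thm:Lp} is in hand; the one place where care is needed is the translation between the abstract semi-boundedness/kernel criterion and the distributional PDE framework in which Theorem \ref{Thm:Lp} is phrased, together with the bookkeeping verifying that the shift of $V$ by the constant $B+1$ preserves the form of the lower bound $\tfrac{C}{r^m}$ and that the Minkowski exponents in \eqref{Cond:1} are the precise specialization to $p=2$ of those in \eqref{Cond:1p}.
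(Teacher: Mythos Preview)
Your proposal is correct and follows essentially the same approach as the paper's own proof: shift the potential by a constant so that the hypotheses of Theorem~\ref{Thm:Lp} at $p=2$ are met, apply the $L^2$ positivity preservation to $\pm u$ to kill the deficiency space, and invoke the standard Reed--Simon criterion for essential self-adjointness. The only cosmetic difference is that the paper shifts by $B$ (so that $\widetilde V\ge C/r^m$), proves essential self-adjointness of $-\Delta+\widetilde V$, and then cites the invariance of essential self-adjointness under constant potential translations \cite[Proposition~4.1]{MT}, whereas you shift by $B+1$ and apply the kernel criterion directly to $-\Delta+V$; your version has the minor advantage of making the semi-boundedness threshold $\lambda>B$ explicit.
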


It is a standard fact (see \cite[Theorem X.26]{RS}) that a necessary and sufficient condition for the operator $-\Delta+V$ to be essentially self-adjoint on the domain $C^\infty_c(M)$ is that the unique solution $u\in L^2$ to $(-\Delta+V)u=0$ is the constant null function.

\begin{proof}
Let $\widetilde{V}=V+B>0$. Consider $u\in L^2$ a solution to $(-\Delta+\widetilde{V})u=0$: by Theorem \ref{Thm:Lp} applied both to $u$ and $-u$ it follows that $u=0$. This means that
\begin{align*}
(-\Delta+\widetilde{V})u=0 \quad \Rightarrow \quad u=0
\end{align*}
and hence $(-\Delta+\widetilde{V})$ is essentially self-adjoint on $C_c^\infty(M)$. By the invariance of the essential self-adjointness with respect to potential translations (see \cite[Proposition 4.1]{MT}), it follows that $(-\Delta+V)$ is essentially self-adjoint on $C_c^\infty(M)$, obtaining the claim.
\end{proof}

\begin{remark}

We stress that the bound $2+2\sqrt{1-C}$ is sharp. Namely, for $h=3$ and for every $n\ge 3$ and $C<1-(h-2)^2/4=3/4$ there exist a $C^2$ $n$-dimensional Riemannian manifold $N$ and a compact set $K\subset N$ 
such that 
\begin{itemize}
	\item $|B_r(K)|\leq E r^h$ for $r$ small enough and 
	\item the equation $(-\Delta +C/r^2)u=0$ admits an $L^2(M)$ solution, which in turn proves that $-\Delta+ \frac{C}{r^2}:C^\infty_c(M)\subset L^2(M) \to L^2(M)$ is not essentially self-adjoint.
\end{itemize}
Indeed, suppose first that $n=h=3$ and $C<3/4$. Let $N:=(\mathbb{R}_{\geq 0}\times_\sigma \mathbb{S}^2, \dint{r}+\sigma^2 g^{\mathbb{S}^2})$ be the model manifold with coordinates $(r,\theta)$ associated to the warping function
\begin{align*}
	\sigma(r):=r(1+r^2)(1+(2/b+1)r^2)^{-\frac{3}{2(2+b)}}, 
\end{align*}
where $b\in \left( 1,\frac{3}{2}\right)$ solves $C=b^2-b\in \left(0,\frac{3}{4}\right)$. Note that $\sigma'(0)=1$ and $\sigma(0)=\sigma''(0)=0$ so that $N$ is $C^2$. Let $K=\{0\}$ be the pole of the model manifold $N$ and define $M:=N\setminus K$ and $u:M\to \rr$ given by
\begin{align*}
	u(r,\theta):=\frac{1}{r^b (1+r^2)}.
\end{align*}
In particular, $u$ is a positive function satisfying
\begin{align*}
\left(-\Delta+\frac{C}{r^2}\right)u=0
\end{align*}
on $M$. Moreover $u\in L^2(M)$ since
\begin{align*}
\int_M u^2 \dvol & = b^{\frac{3}{(2+b)}} 4\pi \int_0^{+\infty} r^{2(1-b)} \left(b+(2+b)r^2 \right)^{-\frac{3}{2+b}} \dvol,
\end{align*}
which is integrable both around 0 and at $+\infty$ thanks to the choice of $b$.
Examples with $n>3=h$ can be obtained by considering $N^3\times \mathbb T^{n-3}$ where $N^3$ is as above, $\mathbb T^{n-3}$ is a $(n-3)$-dimensional torus, and $K=\{0\}\times \mathbb T^{n-3}$. We believe that similar counterexamples should exist also for non-integer $h\in(2,4)$, even if in that case we expect explicit computations to be much more tricky.

\end{remark}

\section{Operator core}\label{Sec:Core}
The second application of Theorem \ref{Thm:Lp} we present is the generalization of Theorem \ref{Thm:ESA} to the context of $L^p$ spaces with $p\neq 2$. Indeed, in this case a similar conclusion can be proved just replacing the self-adjointness with the property that $C_c^\infty$ is an operator core for $L^p$.

The general scheme we adopt will be summarized in the abstract result Theorem \ref{Thm:Final} at the end of this section. This is surely well-known to the experts, and can be deduced from a number of references quoted in the introduction of this paper. However we have not found it explicitly writen in the literature so that we decided to state it.
\medskip

\subsection{Standard notions and results about accretive operators}

We start by recalling the following definition.

\begin{definition}[Strongly continuous semigroup]
A family of bounded operators $\bra{T(t)}_{t\in \rr_{\geq 0}}$ defined over a Banach space $B$ is a \textnormal{strongly continuous semigroup} if
\begin{itemize}
\item $T(0)=I$;
\item $T(s)T(t)=T(s+t)$ for all $s,t\in \rr_{\geq 0}$;
\item for each $\psi\in B$ the map $t\mapsto T(t)\psi$ is continuous.
\end{itemize}
\end{definition}
\noindent
A special class of such semigroups is given by the contraction semigroups. A strongly continuous semigroup $\bra{T(t)}$ defined over a Banach space $B$ is said to be a \textit{contraction semigroup} if
\begin{align*}
||T(t)||\leq 1 \quad \forall t\in \rr_{\geq 0}.
\end{align*}
Here $\norm{\cdot}{}$ denotes the operator norm. The next proposition (\cite[Page 237]{RS}) shows that any contraction semigroup can be ``generated'' by a closed operator.

\begin{proposition}\label{Prop}
Let $T(t)$ be a strongly continuous semigroup on a Banach space $B$ and set
\begin{align*}
A_t:= t^{-1} (I-T(t))
\end{align*}
and
\begin{align*}
A:=\lim_{t\to 0} A_t
\end{align*}
defined over $D(A):=\bra{\psi\in B\ :\ \lim_{t\to 0} A_t \psi\ \textit{exists}}$. Then, $A$ is closed and densely defined.
\end{proposition}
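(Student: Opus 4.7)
The plan is to establish the density of $D(A)$ and the closedness of $A$ separately, using the standard averaging device for strongly continuous semigroups. For density, the natural candidates approximating a generic $\psi \in B$ are the Bochner averages
\[
\psi_t := \frac{1}{t}\int_0^t T(s)\psi\, ds, \qquad t>0,
\]
which are well-defined since $s\mapsto T(s)\psi$ is continuous, and which satisfy $\psi_t\to \psi$ as $t\to 0^+$ by strong continuity at $s=0$. To see that $\psi_t\in D(A)$, I would exploit the semigroup property $T(h)T(s)=T(s+h)$ to rewrite $T(h)\int_0^t T(s)\psi\, ds = \int_h^{t+h} T(s)\psi\, ds$, so that
\[
A_h \psi_t = \frac{1}{ht}\left[\int_0^h T(s)\psi\,ds - \int_t^{t+h} T(s)\psi\,ds\right],
\]
which tends to $t^{-1}(\psi - T(t)\psi)$ as $h\to 0$ by strong continuity. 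Hence $\psi_t\in D(A)$ with explicit $A\psi_t = t^{-1}(\psi - T(t)\psi)$, and $D(A)$ is dense in $B$.

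For closedness, the key tool is the integral identity
\[
\psi - T(t)\psi = \int_0^t T(s)A\psi\, ds \qquad \textrm{for every } \psi\in D(A),
\]
which I would prove by first observing that $D(A)$ is invariant under each $T(t)$ with $AT(t)\psi = T(t)A\psi$ --- a direct consequence of the boundedness of $T(t)$ and its commutation with the difference quotients $A_h$ --- and then checking that $t\mapsto T(t)\psi$ is differentiable with derivative $-T(t)A\psi$. Given then a sequence $\psi_n\in D(A)$ with $\psi_n\to \psi$ and $A\psi_n\to \phi$ in $B$, applying the identity to each $\psi_n$ and passing to the limit --- using the uniform boundedness of $\{T(s)\}_{s\in [0,t]}$ on compact time intervals, a classical application of the Banach--Steinhaus principle to strong continuity --- yields
\[
\psi - T(t)\psi = \int_0^t T(s)\phi\, ds.
\]
Dividing by $t$ and letting $t\to 0^+$, the right hand side tends to $\phi$ by strong continuity of $s\mapsto T(s)\phi$, so that $A_t\psi$ converges to $\phi$; therefore $\psi\in D(A)$ and $A\psi = \phi$, which is precisely closedness.

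The main obstacle, as is typical for this classical generation result, is the rigorous derivation of the integral identity $\psi - T(t)\psi = \int_0^t T(s)A\psi\, ds$: one has to juggle simultaneously the limit defining $A$, the continuity of $s\mapsto T(s)\psi$, and the commutation $T(s)A = AT(s)$ on $D(A)$, and to justify the interchange of these limits with the Bochner integral through the local uniform bound on $\|T(s)\|$. Once this identity is in place, the averaging argument delivers density and the limiting argument delivers closedness essentially formally.
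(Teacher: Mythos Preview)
Your argument is correct and complete: the density via the Bochner averages $\psi_t=\tfrac{1}{t}\int_0^t T(s)\psi\,ds$ and the closedness via the integral identity $\psi-T(t)\psi=\int_0^t T(s)A\psi\,ds$ is precisely the classical proof. Note, however, that the paper does not actually supply a proof of this proposition --- it simply records the statement and refers to \cite[Page~237]{RS}. Your write-up is essentially the argument given in that reference (and in standard semigroup texts such as Pazy or Engel--Nagel), so there is nothing to compare beyond observing that you have filled in what the paper deliberately left to citation.
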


\noindent The operator $A$ is called the \textit{infinitesimal generator} of $T(t)$. We will also say that $A$ generates $T(t)$ and write $T(t)=e^{-tA}$.

\bigskip
In the remaining part of this subsection, we introduce the notions of accretive and maximal accretive operators. To this aim, we recall that given a Banach space $(B,\norm{\cdot}{B})$ and $\psi\in B$, an element in its dual space $l\in B^*$ is said to be a \textit{normalized tangent functional} to $\psi$ if it satisfies
\begin{align*}
||l||_{B^\ast}=||\psi||_B \quad \andd \quad l(\psi)=||\psi||^2_B.
\end{align*}
Observe that by the Hahn-Banach theorem, each $\psi \in B$ has at least one normalized tangent functional.

\begin{definition}[Accretive and m-accretive operator]
A densely defined operator $A$ over a Banach space $B$ is said to be \textnormal{accretive} if for any $\psi\in D(A)$ there exists $l\in B^*$  a normalized tangent functional to $\psi$ so that $\textnormal{Re}(l(A\psi))\geq 0$. 

An accretive operator $A$ is said to be \textnormal{maximal accretive} (or \textnormal{m-accretive}) if it has no proper accretive extensions.
\end{definition}

\begin{remark}\label{Rmk:1}
We stress that
\begin{itemize}
\item every accretive operator is closable;
\item the closure of an accretive operator is again accretive.
\end{itemize}
As a consequence, every accretive operator has a smallest closed accretive extension. For a reference see \cite[Section X.8]{RS}.
\end{remark}

Now we can state the fundamental criterion.

\begin{theorem}[Fundamental criterion]\label{Thm:48}
A closed operator $A$ on a Banach space $B$ is the generator of a contraction semigroup if and only if $A$ is accretive and $\textnormal{Ran}(\lambda_0+A)=B$ for some $\lambda_0>0$.
\end{theorem}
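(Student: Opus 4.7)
This is the classical Lumer--Phillips theorem, and I would handle the two implications separately. For necessity, suppose $A$ generates the contraction semigroup $T(t)=e^{-tA}$. For accretivity, fix $\psi\in D(A)$ and a normalized tangent functional $l$ to $\psi$. The contraction bound gives $|l(T(t)\psi)|\leq \|l\|_{B^*}\|T(t)\psi\|\leq \|\psi\|^2=l(\psi)$, hence $\textnormal{Re}\,l\bigl((I-T(t))\psi\bigr)\geq 0$; dividing by $t>0$ and letting $t\to 0^+$ yields $\textnormal{Re}\,l(A\psi)\geq 0$. For surjectivity of $\lambda+A$ (for every $\lambda>0$, in fact), introduce the bounded operator $R(\lambda)\phi:=\int_0^{\infty}e^{-\lambda t}T(t)\phi\,dt$, whose strong convergence is controlled by the contraction estimate, and check by a direct semigroup manipulation that $(\lambda+A)R(\lambda)=I$ on $B$ and $R(\lambda)(\lambda+A)=I$ on $D(A)$, so that $\textnormal{Ran}(\lambda+A)=B$.

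For sufficiency, assume $A$ is closed and accretive with $\textnormal{Ran}(\lambda_0+A)=B$ for some $\lambda_0>0$. The fundamental resolvent estimate comes first: applying a normalized tangent functional $l$ to $\psi\in D(A)$ in the identity $(\lambda+A)\psi=\phi$ gives $\textnormal{Re}\,l(\phi)\geq \lambda\|\psi\|^2$, whence $\|(\lambda+A)\psi\|\geq \lambda\|\psi\|$ for every $\lambda>0$. Hence $\lambda+A$ is injective and $(\lambda+A)^{-1}$ is defined and bounded by $1/\lambda$ on its range. One then promotes surjectivity from $\lambda_0$ to all $\lambda>0$ by proving that $\Lambda:=\{\lambda>0:\textnormal{Ran}(\lambda+A)=B\}$ is simultaneously open (via the Neumann series $(\mu+A)^{-1}=\sum_{k\geq 0}(\lambda-\mu)^{k}(\lambda+A)^{-(k+1)}$, convergent for $|\lambda-\mu|<\lambda$ thanks to the uniform resolvent bound) and closed (combining the same bound with the closedness of $A$). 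Finally, one introduces the bounded Yosida approximants $A_\lambda:=\lambda A(\lambda+A)^{-1}=\lambda I-\lambda^{2}(\lambda+A)^{-1}$, the associated contraction semigroups $T_\lambda(t):=e^{-tA_\lambda}$, and defines $T(t)\psi:=\lim_{\lambda\to\infty} T_\lambda(t)\psi$.

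The principal technical hurdle is the Yosida approximation step. One must show that (i) $A_\lambda\psi\to A\psi$ as $\lambda\to\infty$ for every $\psi\in D(A)$, which uses $\|\lambda(\lambda+A)^{-1}\|\leq 1$ together with density of $D(A)$ in $B$; (ii) the operators $A_\mu$ and $A_\nu$ mutually commute (and so do their semigroups), so that the identity
\begin{align*}
T_\mu(t)\psi-T_\nu(t)\psi=\int_0^{t}\frac{d}{ds}\bigl[T_\mu(t-s)T_\nu(s)\psi\bigr]\,ds
\end{align*}
forces $\{T_\lambda(t)\psi\}_\lambda$ to be Cauchy uniformly on compact time intervals for $\psi\in D(A)$, and hence, after density, for every $\psi\in B$; (iii) the resulting limit $T(t)$ is a strongly continuous contraction semigroup whose infinitesimal generator coincides with $A$ (and not with a proper closed extension), which is obtained from the closedness of $A$ and the resolvent identity applied to $R(\lambda)\psi=\lim_\lambda R_\lambda(\lambda_0)\psi$.
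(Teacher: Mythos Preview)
Your argument is the standard Lumer--Phillips proof and is correct; the paper itself does not give a proof of this theorem but simply refers to \cite[Theorem X.48]{RS}, which is exactly the argument you have sketched. One cosmetic point: in your final line you use $\lambda$ simultaneously as a fixed resolvent parameter and as the Yosida index tending to infinity, which should be disentangled notationally, but this does not affect the substance.
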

\begin{proof}
We refer to \cite[Theorem X.48]{RS}.
\end{proof}


\begin{remark}\label{Rmk:Before49}
We stress that
\begin{enumerate}

\item by the Hille-Yosida theorem (\cite[Theorem X.47a]{RS}), if $A$ is the generator of a contraction semigroup, then the open half-line $(-\infty,0)$ is contained in the resolvent of $A$. In particular, it follows that $\textnormal{Ran}(I+A)=B$;

\item the generators of contraction semigroups are maximal accretive since the condition $\textnormal{Ran}(I+A)=B$ implies that $A$ has no proper accretive extensions. The converse ($A$ maximal accretive implies $A$ generates a contraction semigroup) holds if $B$ is an Hilbert space but not in the general Banach case. See \cite[Page 241]{RS}.
\end{enumerate}
\end{remark}


\medskip
\subsection{Operator core}

Let $V\in L^\infty\loc(M)$ and consider the differential operator $-\Delta+V$. If $p\in(1,+\infty)$, we define the operator $(-\Delta+V)_{p,max}$ associated to $-\Delta+V$ by the formula
\begin{align*}
(-\Delta+V)_{p,max} u = (-\Delta+V) u
\end{align*}
with domain
\begin{align*}
D\left((-\Delta+V)_{p,max}\right)=\bra{u\in L^p(M)\ :\ Vu\in L^1\loc(M),\ (-\Delta+V)u\in L^p(M)}.
\end{align*}
and the operator $(-\Delta+V)_{p,min}$ as
\begin{align*}
(-\Delta+V)_{p,min}:=(-\Delta+V)_{p,max}\Big|_{C_c^\infty(M)}.
\end{align*}
Observe that since $V\in L^p\loc(M)$, then $C_c^\infty(M)\subset D\left((-\Delta+V)_{p,max}\right)$ and hence the last definition makes sense.

\subsubsection{$\overline{(-\Delta+V)_{p,min}}$ is m-accretive}
Following the strategy of the proof adopted by O. Milatovic in \cite[Section 2]{Mi2}, the next step consists in proving that $\overline{(-\Delta+V)_{p,min}}$ is m-accretive. To this aim, we first prove that this operator is accretive.

\begin{lemma}\label{Lem:2.1}
Let $(M,g)$ be a (possibly incomplete) Riemannian manifold. Consider $0\leq V\in L^\infty\loc (M)$ and let $p\in (1,+\infty)$.

Then, the operator $\overline{(-\Delta+V)_{p,min}}$ is accretive.
\end{lemma}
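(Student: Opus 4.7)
The plan is to exhibit, for each $u\in C_c^\infty(M)$, an explicit normalized tangent functional $l_u\in(L^p)^*\cong L^q$ (with $1/p+1/q=1$) for which $l_u((-\Delta+V)u)\geq 0$. Accretivity of the closure $\overline{(-\Delta+V)_{p,min}}$ will then be automatic from Remark \ref{Rmk:1}, since the closure of an accretive operator is accretive. A canonical candidate is
\begin{align*}
l_u(\phi):=\norm{u}{L^p}^{2-p}\int_M |u|^{p-2}u\,\phi\dvol\qquad \textnormal{if}\ u\not\equiv 0,\qquad l_0:=0,
\end{align*}
and a quick H\"older check gives $\norm{l_u}{L^q}=\norm{u}{L^p}$ together with $l_u(u)=\norm{u}{L^p}^2$, so $l_u$ is indeed a normalized tangent functional to $u$.

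The core step is to verify $l_u((-\Delta+V)u)\geq 0$. The delicate point, and the main obstacle I anticipate, is that $|u|^{p-2}$ is singular on $\{u=0\}$ when $p\in(1,2)$, so a direct integration by parts is not available. To bypass this I would introduce the regularization $f_\epsilon:=(u^2+\epsilon)^{(p-2)/2}$ for $\epsilon>0$; then $f_\epsilon u\in C_c^{0,1}(M)$, and integrating by parts together with the explicit computation of $\nabla f_\epsilon$ yields
\begin{align*}
\int_M f_\epsilon u(-\Delta u)\dvol= \int_M (u^2+\epsilon)^{\frac{p-4}{2}}\bigl[(p-1)u^2+\epsilon\bigr]|\nabla u|^2\dvol\geq 0,
\end{align*}
where the inequality holds thanks to $p>1$. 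Adding the trivially non-negative term $\int_M f_\epsilon V u^2\dvol$ (recall $V\geq 0$) and sending $\epsilon\to 0^+$ by dominated convergence---which applies because $u$ has compact support and $|u|^{p-2}u^2=|u|^p\in L^1(M)$---produces $\int_M |u|^{p-2}u(-\Delta+V)u\dvol\geq 0$, hence $l_u((-\Delta+V)u)\geq 0$.

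This establishes that $(-\Delta+V)_{p,min}$ is accretive on $C_c^\infty(M)$, and Remark \ref{Rmk:1} then upgrades this to accretivity of $\overline{(-\Delta+V)_{p,min}}$. For $p\geq 2$ the regularization is unnecessary and one can integrate by parts directly against $|u|^{p-2}u$; the algebraic factorization $(u^2+\epsilon)^{(p-4)/2}[(p-1)u^2+\epsilon]\geq 0$ is precisely the feature that makes the same argument go through uniformly in $\epsilon$ for $p\in(1,2)$, which I expect to be the only nontrivial technical point in the proof.
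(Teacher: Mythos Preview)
Your proposal is correct and reproduces precisely the standard argument that the paper defers to via citation: the paper's own proof merely invokes \cite[Lemma 2.1 and Remark 2.2]{Mi2} (noting that completeness is not used there), and those results are established by exactly the duality-map and $\epsilon$-regularization computation you carry out. In particular, your handling of the $p\in(1,2)$ singularity via $f_\epsilon=(u^2+\epsilon)^{(p-2)/2}$ and the subsequent passage to the closure through Remark~\ref{Rmk:1} match the cited source.
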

\begin{proof}
It follows by Lemma 2.1 and Remark 2.2 in \cite{Mi2}. These latter are stated for complete manifolds, however the completeness assumption is not used, as remarked in the proof of \cite[Proposition 2.9 (b)]{Gu1}.
	
\end{proof}


From now on we consider a complete Riemannian manifold $(N,h)$ and define $M:=N\setminus K$, where $K\subset N$ is a compact subset. Let $V\in L^\infty\loc (M)$ so that
\begin{align*}
V(x)\geq \frac{C}{r^m(x)} \quad \inn M,
\end{align*}
where $C\in [0,1]$ and $m\in \{0,2\}$ are positive constants and $r(x):=d^N(x,K)$ is the distance function from $K$. Fix $p\in (1,+\infty)$ and suppose there exist two positive constants $E\geq 1$ and
\begin{align*}
h\geq\system{ll}{ 0 & \textnormal{if}\ m=2\ \textnormal{and}\ C=\frac{1}{p-1}\\  p+p\sqrt{1-\frac{C}{p-1}} & \textnormal{if}\ m=2\ \textnormal{and}\ C\in \left(0,\frac{1}{p-1}\right)\\  2p & \textnormal{if}\ m=0} \quad \textnormal{in case}\ p\geq 2
\end{align*}
or
\begin{align*}
h\geq \system{ll}{0 & \textnormal{if}\ m=2\ \textnormal{and}\ C=p-1\\ \frac{p+p\sqrt{1-(p-1)C}}{p-1} & \textnormal{if}\ m=2\ \textnormal{and}\ C\in \left(0,p-1\right)\\ \frac{2p}{p-1} & \textnormal{if}\ m=0} \quad \textnormal{in case}\ p< 2
\end{align*}
so that
\begin{align*}
|B_r(K)|\leq E r^h \quad \textnormal{as}\ r\to 0.
\end{align*}
In what follows we always assume to be in this setting.

\begin{remark}
We stress that in the present section we are requiring the validity of a condition stronger than the one of \eqref{Cond:1p} for the two indexes $p$ and $p'=p/(p-1)$ in order to obtain that both $\overline{(-\Delta+V)_{p,min}}$ and $\overline{(-\Delta+V)_{p',min}}$ are m-accretive. This latter will be used to ensure that the operator $(-\Delta+V)_{p,max}$ is accretive too.
\end{remark}

Thanks to the validity of Theorem \ref{Thm:Lp}, we are able to prove the next

\begin{theorem}\label{Thm:Accretive1}
$\overline{(-\Delta+V)_{p,min}}$ generates a contraction semigroup on $L^p(M)$. In particular, $\overline{(-\Delta+V)_{p,min}}$ is m-accretive.
\end{theorem}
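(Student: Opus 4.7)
The plan is to verify the hypotheses of the fundamental criterion, Theorem \ref{Thm:48}. Accretivity of $A:=\overline{(-\Delta+V)_{p,min}}$ is already provided by Lemma \ref{Lem:2.1}, so the whole content of the theorem reduces to establishing the range condition
\begin{align*}
\mathrm{Ran}(I+A)=L^p(M).
\end{align*}
Once this is shown, Theorem \ref{Thm:48} yields that $A$ generates a contraction semigroup, and m-accretivity then follows from Remark \ref{Rmk:Before49}(2).

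I would split the range statement into closedness and density. \textbf{Closedness} is a soft consequence of accretivity and closedness of $A$: pairing with a normalized tangent functional gives the standard a priori estimate $\|(I+A)u\|_{L^p}\geq \|u\|_{L^p}$ for every $u\in D(A)$. Hence if $(I+A)u_n\to f$ in $L^p$, then $\{u_n\}$ is Cauchy, so $u_n\to u$ in $L^p$ and $Au_n\to f-u$; closedness of $A$ then forces $u\in D(A)$ and $(I+A)u=f$, so $f\in\mathrm{Ran}(I+A)$.

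\textbf{Density} is the substantial step, and this is where Theorem \ref{Thm:Lp} enters through duality. By Hahn--Banach, the range is dense in $L^p(M)$ iff its annihilator in $L^{p'}(M)$ (with $p'=p/(p-1)$) is trivial. An element $v\in L^{p'}(M)$ of the annihilator satisfies
\begin{align*}
\int_M v\,(\psi+(-\Delta+V)\psi)\,\dvol=0\quad \forall\,\psi\in C_c^\infty(M),
\end{align*}
that is, $(-\Delta+\widetilde V)v=0$ in the sense of distributions with $\widetilde V:=V+1$; note $Vv\in L^1\loc$ since $V\in L^\infty\loc$ and $v\in L^{p'}\loc$, so the pairing and the distributional equation are well defined. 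Since $\widetilde V\geq V\geq C/r^m$, the assumptions of Theorem \ref{Thm:Lp} are satisfied for $\widetilde V$ at the exponent $p'$; applying it to both $v$ and $-v$ gives $v\geq 0$ and $v\leq 0$ a.e., hence $v=0$.

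The main obstacle, which is built into the hypotheses of this section rather than into the proof, is that density forces us to apply Theorem \ref{Thm:Lp} at the conjugate exponent $p'$ while accretivity of $A$ sits at exponent $p$. The stronger lower bound on $h$ stated in the preamble of Section \ref{Sec:Core}, distinguishing the cases $p\geq 2$ and $p<2$, is precisely designed so that (Cond:1p) holds simultaneously at $p$ and at $p'$: for $p\geq 2$ one has $p'\leq 2$ and the binding condition is (Cond:1p) for $p'$, which reads $h\geq p(1+\sqrt{1-C/(p-1)})$ after using $p'-1=1/(p-1)$, while for $p<2$ the binding condition is (Cond:1p) for $p$ itself. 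Once this bookkeeping is checked, the preceding two steps combine to yield $\mathrm{Ran}(I+A)=L^p(M)$ and the conclusion follows.
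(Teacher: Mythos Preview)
Your argument is correct and follows the same overall strategy as the paper: accretivity from Lemma \ref{Lem:2.1}, density of the range via duality and Theorem \ref{Thm:Lp} at the conjugate exponent, and then the fundamental criterion. The only noteworthy difference is in the density step: the paper (Lemma \ref{Lem:2.7}) first applies Kato's inequality to pass from $(-\Delta+V+\lambda)v=0$ to $(-\Delta+V)|v|\le 0$ and then invokes Theorem \ref{Thm:Lp} once, whereas you keep the shifted potential $\widetilde V=V+1$ and apply Theorem \ref{Thm:Lp} to both $v$ and $-v$; both routes are valid and yield the same conclusion.
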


The proof of Theorem \ref{Thm:Accretive1} can be obtained verbatim by the one of \cite[Theorem 1.3]{Mi2} just replacing Lemma 2.7 in \cite{Mi2} with Lemma \ref{Lem:2.7} below, which is a consequence of the validity of the positivity preserving property.

\begin{lemma}\label{Lem:2.7}
If $\lambda>0$, then $\textnormal{Ran}\left((-\Delta+V)_{p,min}+\lambda\right)$ is dense in $L^p(M)$.
\end{lemma}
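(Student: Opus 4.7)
The plan is to use a standard duality / Hahn-Banach argument combined with the $L^{p'}$-positivity preservation guaranteed by Theorem \ref{Thm:Lp}, where $p'=p/(p-1)$ is the conjugate exponent. To show that $\textnormal{Ran}\left((-\Delta+V)_{p,min}+\lambda\right)$ is dense in $L^p(M)$ it suffices, by the Hahn-Banach theorem, to prove that its annihilator in $L^{p'}(M)$ is trivial. So let $g\in L^{p'}(M)$ satisfy
\begin{align*}
\int_M g\cdot\left((-\Delta+V)\varphi+\lambda\varphi\right)\dvol=0\qquad\forall\varphi\in C_c^\infty(M).
\end{align*}
Since $V\in L^\infty\loc(M)$ and $g\in L^{p'}\loc(M)\subseteq L^1\loc(M)$, the product $Vg$ lies in $L^1\loc(M)$, so the identity above is exactly the statement that $g$ is a distributional solution of
\begin{align*}
\left(-\Delta+\widetilde V\right)g=0\quad\inn M,\qquad\widetilde V:=V+\lambda.
\end{align*}

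The next step is to apply Theorem \ref{Thm:Lp} to both $g$ and $-g$ with the exponent $p'$ and potential $\widetilde V$. Since $\lambda>0$, we have $\widetilde V\in L^\infty\loc(M)$ and $\widetilde V\geq C/r^m$ with the same constants $C\in[0,1]$ and $m\in\{0,2\}$; moreover the Minkowski bound $|B_r(K)|\leq Er^h$ is unchanged. The only real verification is that the reinforced hypothesis on $h$ imposed in this section is precisely what makes condition \eqref{Cond:1p} hold for the index $p'$. A direct case analysis confirms this: using $p'-1=1/(p-1)$ and $p'/(p'-1)=p$, the bound $(p'+p'\sqrt{1-(p'-1)C})/(p'-1)$ of \eqref{Cond:1p} written for $p'$ equals $p+p\sqrt{1-C/(p-1)}$, which is exactly the threshold assumed here in the case $p\geq 2$, $m=2$, $C\in(0,1/(p-1))$; and analogously $2p'/(p'-1)=2p$ matches the case $m=0$. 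When $p<2$ one has $p'>2$, and a short calculation shows that the $p<2$ threshold on $h$ prescribed in this section dominates the corresponding threshold \eqref{Cond:1p} for $p'$.

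With the hypotheses of Theorem \ref{Thm:Lp} fulfilled for exponent $p'$ and potential $\widetilde V$, applying that theorem to $g$ yields $g\geq 0$ a.e., while applying it to $-g$ yields $g\leq 0$ a.e. Hence $g\equiv 0$, the annihilator is trivial, and the desired density follows. The main obstacle, or rather the only nontrivial point, is the arithmetic verification above: the slightly baroque lower bounds on $h$ stated at the beginning of Section \ref{Sec:Core} were tailor-made exactly so that this step goes through for both $p$ and $p'$, which is what motivates the remark preceding the lemma.
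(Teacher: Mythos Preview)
Your proof is correct and follows essentially the same duality scheme as the paper: take $g\in L^{p'}$ annihilating the range, read this as a distributional equation, and invoke Theorem~\ref{Thm:Lp} for the exponent $p'$ (whose hypotheses are guaranteed by the reinforced bounds on $h$ in this section) to conclude $g\equiv 0$. The only difference is a cosmetic one in the final step: the paper applies Kato's inequality to obtain $(-\Delta+V)|v|\le 0$ and then uses Theorem~\ref{Thm:Lp} once, whereas you skip Kato entirely and apply Theorem~\ref{Thm:Lp} directly to $g$ and $-g$ with the shifted potential $\widetilde V=V+\lambda$; your route is slightly more economical, but the two arguments are otherwise identical.
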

\begin{proof}
Let $v\in L^{p'}(M)$ so that
\begin{align*}
\left\langle (\lambda+(-\Delta+V)_{p,min})u,v\right\rangle =0 \quad \forall u \in C^\infty_c(M),
\end{align*}
which is equivalent to the following distributional equality
\begin{align*}
(\lambda-\Delta+V)v=0.
\end{align*}
Since by hypothesis $V\in L^p\loc(M)$ and $v\in L^{p'}(M)$, by H\"older inequality $Vv\in L^1\loc$. Since $\Delta v=Vv+\lambda v$, we get $\Delta v \in L^1\loc(M)$. By Kato's inequality
\begin{align*}
-\Delta |v| \leq -\Delta v\ \textnormal{sign}\ v=(-\lambda v-Vv)\ \textnormal{sign}\ v\leq -V|v|
\end{align*}
and hence
\begin{align*}
(-\Delta+V)|v|\leq 0.
\end{align*}
By Theorem \ref{Thm:Lp} it follows that $|v|\leq 0$ and hence $v=0$.
\end{proof}

\subsubsection{$(-\Delta+V)_{p,max}$ is m-accretive}
After proving that $\overline{(-\Delta+V)_{p,min}}$ is m-accretive, the next stage is to show the same property for the operator $(-\Delta+V)_{p,max}$. We proceed by introducing the following result contained in \cite[Lemma I.25]{Gu2}

\begin{lemma}\label{Lem:I.25}
Let $p\in(1,+\infty)$ and $p'=p/(p-1)$. Then
\begin{align*}
(-\Delta+V)_{p,max}=\left((-\Delta+V)_{p',min}\right)^*.
\end{align*}
\end{lemma}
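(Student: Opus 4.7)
\textbf{Proof plan for Lemma \ref{Lem:I.25}.} The plan is to establish the two inclusions $(-\Delta+V)_{p,max}\subseteq\left((-\Delta+V)_{p',min}\right)^*$ and $\left((-\Delta+V)_{p',min}\right)^*\subseteq (-\Delta+V)_{p,max}$ by directly unfolding the definitions and using the $L^p$--$L^{p'}$ duality pairing $\langle f,g\rangle=\int_M fg\dvol$, together with the fact that $-\Delta+V$ is formally symmetric on $C_c^\infty(M)$. A useful preliminary observation is that, since $V\in L^\infty\loc(M)$, every $u\in L^p(M)\subset L^1\loc(M)$ automatically satisfies $Vu\in L^1\loc(M)$; in particular the local integrability condition in the definition of $D\left((-\Delta+V)_{p,max}\right)$ is redundant here, and being in that domain reduces to $u\in L^p(M)$ together with $(-\Delta+V)u\in L^p(M)$ in the distributional sense.

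For the inclusion $\subseteq$, I would take $u\in D\left((-\Delta+V)_{p,max}\right)$ and set $w:=(-\Delta+V)_{p,max}u\in L^p(M)$. By the distributional definition of $(-\Delta+V)u=w$, for every $\psi\in C_c^\infty(M)=D\left((-\Delta+V)_{p',min}\right)$ one has
\begin{align*}
\int_M \left((-\Delta+V)_{p',min}\psi\right) u\dvol=\int_M u(-\Delta+V)\psi\dvol=\int_M w\psi\dvol,
\end{align*}
which is precisely the identity witnessing $u\in D\left(\left((-\Delta+V)_{p',min}\right)^*\right)$ with adjoint value $w$.

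The reverse inclusion proceeds along the same lines read backwards. Given $u\in D\left(\left((-\Delta+V)_{p',min}\right)^*\right)$ with adjoint value $w\in L^p(M)$, the defining identity of the adjoint reads
\begin{align*}
\int_M u\left((-\Delta+V)\psi\right)\dvol=\int_M w\psi\dvol\quad \forall \psi\in C_c^\infty(M),
\end{align*}
which is exactly the distributional statement $(-\Delta+V)u=w$. Combined with $Vu\in L^1\loc(M)$ from the preliminary observation, this gives $u\in D\left((-\Delta+V)_{p,max}\right)$ with $(-\Delta+V)_{p,max}u=w$, closing the argument.

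I do not expect any substantial obstacle: the lemma is essentially a rephrasing of the same integral pairing, once in distributional language and once in adjoint language. The only points requiring care are the correct identification of the $L^p$--$L^{p'}$ duality with the integral pairing, the observation that $V\in L^\infty\loc$ makes the local integrability condition $Vu\in L^1\loc$ automatic (so no extra regularity on $u$ is needed to make sense of the distributional equation), and the fact that the adjoint is computed with respect to the minimal domain $C_c^\infty(M)$, not its closure.
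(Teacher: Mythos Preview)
Your argument is correct. The paper does not actually supply a proof of this lemma: it simply records it as a known fact and cites \cite[Lemma I.25]{Gu2}. What you have written is precisely the standard verification that would appear in that reference, namely reading the distributional identity $\int_M u(-\Delta+V)\psi\,\dvol=\int_M w\psi\,\dvol$ for all $\psi\in C_c^\infty(M)$ once as the definition of $(-\Delta+V)u=w$ and once as the definition of $u$ lying in the adjoint domain with adjoint value $w$. Your preliminary remark that $V\in L^\infty\loc$ renders the condition $Vu\in L^1\loc$ automatic is the only point where some care is needed, and you have handled it correctly. So your proof is complete and matches in spirit the content of the cited reference; it simply makes explicit what the paper leaves to the literature.
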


As a consequence, we get

\begin{theorem}\label{Thm:Accretive2}
$(-\Delta+V)_{p,max}$ generates a contraction semigroup on $L^p$. In particular, $(-\Delta+V)_{p,max}$ is m-accretive.
\end{theorem}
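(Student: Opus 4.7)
The plan is to obtain $(-\Delta+V)_{p,max}$ as the adjoint of the generator of a contraction semigroup on the dual Banach space, and then invoke the general duality theory for $C_0$-semigroups on reflexive Banach spaces.

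First, note that the standing assumption on $h$ is symmetric in $p$ and $p'=p/(p-1)$: indeed, the condition stated before Theorem \ref{Thm:Accretive1} was deliberately set up so that both $p$ and $p'$ satisfy \eqref{Cond:1p}. Therefore Theorem \ref{Thm:Accretive1}, applied with $p$ replaced by $p'$, yields that $\overline{(-\Delta+V)_{p',min}}$ generates a contraction semigroup $\{S(t)\}_{t\geq 0}$ on $L^{p'}(M)$, and in particular is m-accretive.

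Next, by Lemma \ref{Lem:I.25} we have the identification
\begin{align*}
(-\Delta+V)_{p,max}=\bigl((-\Delta+V)_{p',min}\bigr)^*.
\end{align*}
Since taking adjoints is insensitive to closure of a densely defined operator (i.e. $(\overline{A})^*=A^*$, see \cite[Theorem 5.29]{Ka}), the right-hand side coincides with $\bigl(\overline{(-\Delta+V)_{p',min}}\bigr)^*$. Thus $(-\Delta+V)_{p,max}$ is precisely the Banach space adjoint of the generator of the contraction semigroup $\{S(t)\}$ on $L^{p'}(M)$.

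Since $p\in(1,+\infty)$, the space $L^{p'}(M)$ is reflexive, and the classical duality theory of $C_0$-semigroups on reflexive Banach spaces applies: the adjoint semigroup $\{S(t)^*\}_{t\geq 0}$ acting on $(L^{p'}(M))^*=L^{p}(M)$ is again a strongly continuous semigroup, its contraction property is preserved (because $\|S(t)^*\|=\|S(t)\|\leq 1$), and its infinitesimal generator is exactly the adjoint of the generator of $\{S(t)\}$. Combining this with the identification in the previous step yields that $(-\Delta+V)_{p,max}$ generates the contraction semigroup $\{S(t)^*\}$ on $L^p(M)$. Finally, Remark \ref{Rmk:Before49}(2) ensures that $(-\Delta+V)_{p,max}$ is m-accretive, completing the proof.

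The genuine analytical work is entirely contained in Theorem \ref{Thm:Accretive1} (through the $L^{p'}$-positivity preservation provided by Theorem \ref{Thm:Lp} and the density Lemma \ref{Lem:2.7}); the present step is essentially a functional-analytic duality argument. The only delicate point is to verify that the strong continuity of the adjoint semigroup survives on the full dual space, which is precisely where reflexivity of $L^{p'}(M)$ (equivalently $p\in(1,+\infty)$) is crucial.
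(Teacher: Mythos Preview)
Your proof is correct and follows essentially the same approach as the paper: apply Theorem \ref{Thm:Accretive1} with $p'$ in place of $p$, use Lemma \ref{Lem:I.25} to identify $(-\Delta+V)_{p,max}$ with the adjoint of $\overline{(-\Delta+V)_{p',min}}$, and then invoke the duality theory for $C_0$-semigroups on reflexive Banach spaces. Your version is in fact slightly more explicit, in that you spell out the step $(\overline{A})^*=A^*$ and the role of reflexivity, which the paper's proof leaves implicit.
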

\begin{proof}
The proof follows as in \cite[Theorem 5]{GP}. Indeed, by Theorem \ref{Thm:Accretive1} the operator $\overline{(-\Delta+V)_{p',min}}$ generates a contraction semigroup and by Lemma \ref{Lem:I.25}
\begin{align*}
(-\Delta+V)_{p,max}=\left(\overline{(-\Delta+V)_{p',min}}\right)^*.
\end{align*}
Since adjoints of generators of contraction semigroups in reflexive Banach spaces again generate such semigroups \cite[p.138]{Ar}, it follows that  $(-\Delta+V)_{p,max}$ generates a contraction semigroup and thus is m-accretive.
\end{proof}

\subsubsection{Main result}

Before proceeding with the main result of this section, we recall the following

\begin{definition}
Let $T$ be a closed operator over a Banach space $B$. For any closable operator $S$ such that $\overline{S}=T$, its domain $D(S)$ is said to be a \textnormal{core} of $T$.
\end{definition}

\noindent In other words, $D\subset D(T)$ is a core of $T$ if the set $\bra{(u,Tu)\ :\ u\in D}$ is dense in $\Gamma(T)$.
 

\begin{theorem}\label{Thm:Core}
Let $(N,h)$ be a complete Riemannian manifold and define $M:=N\setminus K$, where $K\subset N$ is a compact subset. Consider $V\in L^\infty\loc (M)$ so that
\begin{align*}
V(x)\geq \frac{C}{r^m(x)}-B \quad in\ M,
\end{align*}
where $C\in [0,1]$, $m\in {0,2}$ and $B$ are positive constants and $r(x):=d^N(x,K)$ is the distance function from $K$, and fix $p\in (1,+\infty)$.

If there exist two positive constants $E\geq 1$ and
\begin{align}\label{Cond:3p1}
h\geq\system{ll}{ 0 & if\ m=2\ and\ C=\frac{1}{p-1}\\  p+p\sqrt{1-\frac{C}{p-1}} & if\ m=2\ and\ C\in \left(0,\frac{1}{p-1}\right)\\  2p & if\ m=0} \quad in\ case\ p\geq 2
\end{align}
or
\begin{align}\label{Cond:3p2}
h\geq\system{ll}{0 & if\ m=2\ and\ C=p-1\\ \frac{p+p\sqrt{1-(p-1)C}}{p-1} & if\ m=2\ and\ C\in \left(0,p-1\right)\\ \frac{2p}{p-1} & if\ m=0} \quad in\ case\ p< 2
\end{align}
so that
\begin{align}\label{Cond:4p}
|B_r(K)|\leq E r^h \quad as\ r\to 0,
\end{align}
then $C^\infty_c(M)$ is an operator core for $(-\Delta+V)_{p,max}$.
\end{theorem}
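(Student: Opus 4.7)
The plan is to combine Theorems \ref{Thm:Accretive1} and \ref{Thm:Accretive2} with the abstract fact that an m-accretive operator admits no proper accretive extension. Since being a core for the maximal operator is precisely the statement $\overline{(-\Delta+V)_{p,min}}=(-\Delta+V)_{p,max}$, the proof reduces to showing this equality. As a first step I would pass from $V$ to $\widetilde V:=V+B$, so that $\widetilde V\geq C/r^m$ and we may work in the non-negative potential setting of the previous subsection. Multiplication by the constant $B$ is a bounded operator on $L^p(M)$; one then checks routinely that adding or subtracting $B\cdot I$ preserves domains and commutes with taking closures, so that
\begin{equation*}
(-\Delta+V)_{p,max}=(-\Delta+\widetilde V)_{p,max}-B\cdot I,\qquad \overline{(-\Delta+V)_{p,min}}=\overline{(-\Delta+\widetilde V)_{p,min}}-B\cdot I
\end{equation*}
with equal domains on each side. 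Consequently, being a core is invariant under this shift, and it is enough to prove the claim for $\widetilde V$.

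The quantitative assumptions \eqref{Cond:3p1}--\eqref{Cond:3p2} are precisely those introduced in the setting preceding Theorems \ref{Thm:Accretive1} and \ref{Thm:Accretive2}, and are designed so that both theorems apply to $\widetilde V$ at the exponent $p$ (recall that, as explained in the paper, the branching between $p\geq 2$ and $p<2$ is built to dominate the $L^p$ positivity preserving thresholds at the two conjugate exponents $p$ and $p'=p/(p-1)$ simultaneously). Invoking them, both $\overline{(-\Delta+\widetilde V)_{p,min}}$ and $(-\Delta+\widetilde V)_{p,max}$ generate contraction semigroups on $L^p(M)$ and are therefore m-accretive by Remark \ref{Rmk:Before49}(2). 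On the common initial domain $C^\infty_c(M)$ the two operators trivially agree, giving the inclusion $(-\Delta+\widetilde V)_{p,min}\subset (-\Delta+\widetilde V)_{p,max}$. Since the maximal operator is closed, this persists after passing to the closure:
\begin{equation*}
\overline{(-\Delta+\widetilde V)_{p,min}}\subset (-\Delta+\widetilde V)_{p,max}.
\end{equation*}

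The m-accretivity of the left-hand side, which by definition rules out any proper accretive extension, forces the inclusion above to be an equality. Unwrapping the definition of a core then yields the statement for $\widetilde V$, and the initial reduction transfers it to $V$. The proof is essentially an assembly of the results already established in this section, and no step constitutes a genuine obstacle; the only small technical care required is in verifying that the shift by the bounded operator $B\cdot I$ preserves all the relevant closure and domain relations, which is a standard fact in unbounded operator theory.
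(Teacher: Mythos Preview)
Your proposal is correct and follows essentially the same route as the paper: shift to $\widetilde V=V+B$, invoke Theorems~\ref{Thm:Accretive1} and~\ref{Thm:Accretive2} to get m-accretivity of both $\overline{(-\Delta+\widetilde V)_{p,min}}$ and $(-\Delta+\widetilde V)_{p,max}$, deduce their equality from the inclusion and maximality, and then transfer back to $V$ via invariance under bounded potential shifts. The paper packages the last step into Remark~\ref{Rmk:CoreInvariance}, but the content is the same as your explicit domain/closure argument.
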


%
\begin{proof}
Let $\widetilde{V}=V+B>0$. By Theorem \ref{Thm:Accretive1} and Theorem \ref{Thm:Accretive2}, both $\overline{(-\Delta+\widetilde{V})_{p,min}}$ and $(-\Delta+\widetilde{V})_{p,max}$ are m-accretive. By the fact that $\overline{(-\Delta+\widetilde{V})_{p,min}}\subset (-\Delta+\widetilde{V})_{p,max}$ and by the definition of m-accretive operator, it follows that $\overline{(-\Delta+\widetilde{V})_{p,min}}=(-\Delta+\widetilde{V})_{p,max}$, obtaining that $C^\infty_c(M)$ is an operator core for $(-\Delta+\widetilde{V})_{p,max}$. By the invariance of this property with respect to potential translations (see Remark \ref{Rmk:CoreInvariance} below), we get the claim.
\end{proof}

\begin{remark}\label{Rmk:CoreInvariance}
We observe that $C^\infty_c(M)$ is an operator core for $(-\Delta+V)_{p,max}$, then $C^\infty_c$ is an operator core also for $(-\Delta+V+\lambda)_{p,max}$ for every $\lambda \in \rr$.

Indeed, suppose that $C^\infty_c(M)$ is an operator core for $(-\Delta+V)_{p,max}$, meaning that $\{(u,(-\Delta+V)u)\ :\ u\in C^\infty_c(M)\}$ is dense in $\Gamma((-\Delta+V)_{p,max})$. Fixed $\lambda\in \rr$, consider $(u,(-\Delta+V+\lambda)u)\in \Gamma((-\Delta+V+\lambda)_{p,max})$ and observe that
\begin{align*}
D((-\Delta+V+\lambda)_{p,max})=D((-\Delta+V)_{p,max})
\end{align*}
and hence
\begin{align*}
(u,(-\Delta+V)u)\in \Gamma((-\Delta+V)_{p,max}).
\end{align*}
By the fact that $C^\infty_c(M)$ is an operator core for $(-\Delta+V)_{p,max}$ it follows that there exists $\{u_n\}_n\subset C^\infty_c(M)$ so that
\begin{align*}
(u_n,(-\Delta+V)u_n)\xrightarrow[]{n}(u,(-\Delta+V)u) \quad \inn \Gamma((-\Delta+V)_{p,max}),
\end{align*}
i.e.
\begin{align*}
\norm{u_n-u}{L^p}+\norm{(-\Delta+V)(u_n-u)}{L^p}\xrightarrow[]{n}0,
\end{align*}
implying that
\begin{enumerate}
\item $\norm{u_n-u}{L^p}\xrightarrow[]{n}0$
\item $\norm{(-\Delta+V)(u_n-u)}{L^p}\xrightarrow[]{n}0$.
\end{enumerate}
Whence, by Minkowski inequality,
\begin{align*}
\norm{(-\Delta+V+\lambda)(u_n-u)}{L^p} & \leq \norm{(-\Delta+V)(u_n-u)}{L^p}+|\lambda| \norm{u_n-u}{L^p}\xrightarrow[]{n}0.
\end{align*}
and hence $(-\Delta+V+\lambda)u_n\xrightarrow[]{L^p}(-\Delta+V+\lambda)u$. So
\begin{align*}
(u_n,(-\Delta+V+\lambda)u_n)\xrightarrow[]{n}(u,(-\Delta+V+\lambda)u) \quad \inn \Gamma((-\Delta+V+\lambda)_{p,max}).
\end{align*}
It follows that for every $\lambda\in \rr$ the set $\{(u,(-\Delta+V+\lambda)u\ :\ u\in C^\infty_c(M)\}$ is dense in $\Gamma((-\Delta+V+\lambda)_{p,max})$ and hence $C^\infty_c(M)$ is an operator core for $(-\Delta+V+\lambda)_{p,max}$. 
\end{remark}

\begin{remark}
In case $p=2$ (and hence $p'=2$), we recover the result contained in Theorem \ref{Thm:ESA}. Indeed, under the assumptions of Theorem \ref{Thm:ESA}, the condition 
\begin{center}
$C^\infty_c(M)$ \textit{is an operator core for} $(-\Delta+V)_{2,max}$
\end{center}
means exactly that the operator $-\Delta+V$ is essentially self-adjoint on $C^\infty_c(M)$. 
\end{remark}
\medskip

\subsection{Consequence of the above construction} As we can see from the previous discussion, the construction carried out in this section is guaranteed even under more general assumptions than those required in Theorem \ref{Thm:Core}. In fact, we can observe that for the proofs of Theorems \ref{Thm:Accretive1} and \ref{Thm:Accretive2}, which are the key results from which Theorem \ref{Thm:Core} immediately follows, only the property of positivity preservation for the operator $-\Delta + V$ is required. As a direct consequence of this fact, we obtain a machinery that ensures that $C^\infty_c$ is an operator core for the $p$-maximal extension of a given Schrödinger operator as soon as the underlying manifold satisfies the positivity preservation for that operator for the index $p$ and for its dual $p'$. We summarize this result in the following

\begin{theorem}\label{Thm:Final}
Let $(M,g)$ be a (possibly) incomplete Riemannian manifold. Consider $0<V\in L^\infty\loc(M)$ and $p\in (1,+\infty)$ and define $p'=\frac{p}{p-1}$.

If $(M,g)$ satisfies both the $L^p$ and $L^{p'}$ positivity preserving property for the operator $-\Delta+V$, then $C^\infty_c(M)$ is an operator core for $(-\Delta+V)_{p,max}$.
\end{theorem}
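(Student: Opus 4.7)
The plan is to mimic the architecture of Section \ref{Sec:Core}, substituting the abstract positivity preservation hypothesis for the concrete Theorem \ref{Thm:Lp}. The end goal is to establish that the two operators $\overline{(-\Delta+V)_{p,min}}$ and $(-\Delta+V)_{p,max}$ coincide, which by definition of core immediately yields the claim.

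First I would check that $\overline{(-\Delta+V)_{q,min}}$ is accretive for each $q\in\{p,p'\}$: this is Lemma \ref{Lem:2.1}, whose proof uses only $0\le V\in L^\infty_{loc}$ and no completeness of $M$. The decisive step is then to verify that for every $\lambda>0$ and each $q\in\{p,p'\}$, the range of $(-\Delta+V)_{q,min}+\lambda$ is dense in $L^q(M)$, by mimicking Lemma \ref{Lem:2.7} verbatim. Namely, if $v\in L^{q'}(M)$ annihilates the range, then $(\lambda-\Delta+V)v=0$ distributionally, and Kato's inequality gives
\begin{equation*}
(-\Delta+V)|v|\le -\lambda|v|\le 0,
\end{equation*}
equivalently $(-\Delta+V)(-|v|)\ge 0$. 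Since $-|v|\in L^{q'}(M)$, the $L^{q'}$ positivity preservation for $-\Delta+V$ forces $|v|\equiv 0$. Note that the density argument for $q=p$ invokes the $L^{p'}$ hypothesis while the one for $q=p'$ invokes the $L^{p}$ hypothesis, so both abstract assumptions are used exactly once.

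Once both ranges are dense, the Fundamental Criterion (Theorem \ref{Thm:48}) promotes $\overline{(-\Delta+V)_{p,min}}$ and $\overline{(-\Delta+V)_{p',min}}$ to generators of contraction semigroups on their respective $L^q$ spaces; in particular, each is m-accretive by Remark \ref{Rmk:Before49}. Invoking Lemma \ref{Lem:I.25} I would then identify $(-\Delta+V)_{p,max}$ as the adjoint of $\overline{(-\Delta+V)_{p',min}}$. Since $L^p$ is reflexive for $p\in(1,+\infty)$, the adjoint of a contraction-semigroup generator in a reflexive Banach space is again the generator of a contraction semigroup, so $(-\Delta+V)_{p,max}$ is itself m-accretive. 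Combining the inclusion $\overline{(-\Delta+V)_{p,min}}\subseteq (-\Delta+V)_{p,max}$ with the m-accretivity of the former now forces equality by maximality, which is precisely the statement that $C^\infty_c(M)$ is an operator core for $(-\Delta+V)_{p,max}$.

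The main obstacle is conceptual rather than technical. The Section \ref{Sec:Core} machinery is entirely functional-analytic except at one precise point, Lemma \ref{Lem:2.7}, where the underlying geometry enters through Theorem \ref{Thm:Lp}. One must recognize that this single step depends solely on the positivity preservation at the dual index, so that the pair of abstract hypotheses of Theorem \ref{Thm:Final} is exactly the right input; the remaining ingredients — accretivity, adjoint identification, duality between maximal and minimal extensions, and reflexivity — proceed unchanged.
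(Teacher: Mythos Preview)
Your proposal is correct and follows exactly the approach the paper intends: the paper's Theorem \ref{Thm:Final} is stated precisely as the observation that the whole machinery of Section \ref{Sec:Core} (Lemmas \ref{Lem:2.1}, \ref{Lem:2.7}, \ref{Lem:I.25}, Theorems \ref{Thm:Accretive1}, \ref{Thm:Accretive2}) uses the geometry only through the positivity preservation step in Lemma \ref{Lem:2.7}, so that replacing Theorem \ref{Thm:Lp} by the abstract $L^p$ and $L^{p'}$ hypotheses yields the same conclusion. Your identification of which dual index is invoked at each density step is also accurate.
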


\bibliography{references}
\bibliographystyle{abbrv}

\end{document}